\documentclass{tac}



\usepackage{xypic}
\usepackage{amsmath,amssymb}




\author {Francis Borceux, Stefaan Caenepeel and George Janelidze}


\thanks{The second author is partially supported by research project G.0622.06 ``Deformation quantization methods
for algebras and categories with applications to quantum mechanics" from
FWO-Vlaanderen, and the third author is partially supported by South African NRF.}

\address{D\'epartement de Math\'ematique, Universit\'e Catholique de Louvain, 
Chemin du Cyclotron 2, B-1347 Louvain-la-Neuve, Belgium\\[5pt]
Faculty of Engineering,
Vrije Universiteit Brussel, Pleinlaan 2, B-1050 Brussels, Belgium\\[5pt]
 Department of Mathematics and Applied Mathematics, University of Cape Town, Rondebosch 7701, South Africa}

\title {Monadic approach to Galois descent and cohomology}
\dedication{Dedicated to Dominique Bourn at the occasion
of his sixtieth birthday.}


\copyrightyear{2008}


\keywords{Descent theory, Galois theory, monadic functor, group cohomology}
\amsclass{18C15, 18D10}

\eaddress{borceux@math.ucl.ac.be\CR scaenepe@vub.ac.be\CR janelg@telkomsa.net}


\newtheorem{proposition}{Proposition}[section]
\newtheorem{lemma}[proposition]{Lemma}
\newtheorem{corollary}[proposition]{Corollary}
\newtheorem{theorem}[proposition]{Theorem}

\theoremstyle{definition}
\newtheorem{definition}[proposition]{Definition}
\newtheorem{example}[proposition]{Example}

\newtheorem{observation}[proposition]{Observation}
\newtheorem{situation}[proposition]{Situation}

\theoremstyle{remark}
\newtheorem{remark}[proposition]{Remark}

\newcommand{\thlabel}[1]{\label{th:#1}}
\newcommand{\thref}[1]{Theorem~\ref{th:#1}}
\newcommand{\selabel}[1]{\label{se:#1}}
\newcommand{\seref}[1]{Section~\ref{se:#1}}
\newcommand{\lelabel}[1]{\label{le:#1}}
\newcommand{\leref}[1]{Lemma~\ref{le:#1}}
\newcommand{\prlabel}[1]{\label{pr:#1}}
\newcommand{\prref}[1]{Proposition~\ref{pr:#1}}
\newcommand{\colabel}[1]{\label{co:#1}}
\newcommand{\coref}[1]{Corollary~\ref{co:#1}}
\newcommand{\relabel}[1]{\label{re:#1}}
\newcommand{\reref}[1]{Remark~\ref{re:#1}}
\newcommand{\exlabel}[1]{\label{ex:#1}}

\newcommand{\delabel}[1]{\label{de:#1}}
\newcommand{\deref}[1]{Definition~\ref{de:#1}}
\newcommand{\eqlabel}[1]{\label{eq:#1}}
\newcommand{\equref}[1]{(\ref{eq:#1})}
\newcommand{\silabel}[1]{\label{si:#1}}
\newcommand{\siref}[1]{Situation~\ref{si:#1}}

\def\mapright#1{\smash{\mathop{\longrightarrow}\limits^{#1}}}

\def\equal#1{\smash{\mathop{=}\limits^{#1}}}

\newcommand{\Fam}{{\rm Fam}}

\newcommand{\Pic}{{\rm Pic}}

\newcommand{\End}{{\rm End}}

\newcommand{\Aut}{{\rm Aut}}

\def\ot{\otimes}

\def\aA{{\rm \bf A}}
\def\xX{{\rm \bf X}}
\def\yY{{\rm \bf Y}}
\def\uU{{\rm \bf U}}
\def\cC{{\rm \bf C}}
\def\dD{{\rm \bf D}}
\def\eE{{\rm \bf E}}
\def\sS{{\rm \bf S}}
\def\mM{{\rm \bf M}}

\def\Cat{{ \rm\bf Cat}}
\def\Mod{{ \rm\bf Mod}}
\def\GL{{ \rm\bf GL}}
\def\PGL{{ \rm\bf PGL}}
\def\Alg{{ \rm\bf Alg}}
\newcommand{\Sets}{{\rm \bf Sets}}

\def\ZZ{{\mathbb Z}}

\def\*C{{}^*\hspace*{-1pt}{\cC}}

\def\text#1{{\rm {\rm #1}}}

\def\ul{\underline}


\begin{document}

\maketitle
\begin{abstract}
 We describe a simplified categorical approach to Galois descent theory. It is well known that Galois descent is a special case of Grothendieck descent, and that under mild additional conditions the category of Grothendieck descent data coincides with the Eilenberg-Moore category of algebras over a suitable monad. This also suggests using monads directly, and our monadic approach to Galois descent makes no reference to Grothendieck descent theory at all. In order to make Galois descent constructions perfectly clear, we also describe their connections with some other related constructions of categorical algebra, and make various explicit calculations, especially with 1-cocycles and 1-dimensional non-abelian cohomology, usually omitted in the literature.
\end{abstract}


\section*{Introduction}\selabel{0}
The so-called Galois descent theory is an old tool used in algebraic geometry, algebraic number theory, and related topics of ring and module theory. It allows to describe certain structures of algebra and geometry defined over a base field B using similar structures defined over a Galois extension E of B. Its results are {\it descent theorems} that are of the form\\
\hspace*{1cm}{\it The category of $B$-structures is canonically equivalent}\\
    \hspace*{1cm}{\it       to the category of descent data of $E$-structures}\\
usually supplied by descriptions of the set of isomorphism classes of $B$-structures with a fixed extension to $E$ in terms of 1-dimensional cohomology of the Galois group 
of the Galois extension $E/B$. If one replaces the Galois extension $E/B$ with an arbitrary morphism 
$p :\ E \to B$ in a category $\cC$ equipped with a Grothendieck fibration $(\aA,F)$ of categories over $\cC$, there is still a {\it canonical comparison functor}
$$K^p:\ \aA^B\to {\rm Des}(p)$$
from the category ${\rm Des}(p)$ of suitably defined {\it descent data} to the fibre 
$\aA^B$ of $(\aA,F)$ over $B$. In the Galois descent constructions the role of $\cC$ is usually played by the dual category of commutative rings, which explains the direction of the arrow $p$ (and this is why we use the opposite direction in our \seref{4}). According to Grothendieck descent theory, the morphism $p :\ E \to B$ is said to be an {\it effective descent morphism} if $K^p$ is a category equivalence. Under certain additional conditions on $(\aA,F)$ that hold in many known examples, $p :\ E \to B$ is an effective descent morphism if and only if the change-of-base functor
$$p^*:\ \aA^B\to \aA^E$$
is monadic. This follows from the fact that $K^p$ becomes nothing but the comparison functor, in the sense of monad theory, from $\aA^B$ to the category of algebras over the monad determined by $p^*$ and its left adjoint.

In this paper we develop a simplified categorical approach to Galois descent by presenting the category of Galois descent data as the category of algebras over a suitable monad directly, i.e. not involving the intermediate context of Grothendieck fibrations and Grothendieck descent. We also show that the monadic approach is helpful in presenting the ($1$-dimensional) cohomological side of the story by transforming descent data into suitable $1$-cocycles - again directly, i.e. not using Grothendieck's cocycle condition. In spite of our goal of simplification, we present some intermediate categorical constructions and various explicit calculations usually omitted in the literature; we believe they are necessaryÉ. On the other hand we omit some other important constructions and their natural generalizations; e.g. we do not consider torsors, do not discuss generalizations involving categorical Galois theory whose Galois groupoids are internal groupoids in an abstract category, and do not even consider infinite Galois extensions of fields.

\seref{1} begins with $1$-dimensional cocycles and cohomology  $H^1(G,A)$ of a group 
$G$ with abelian and then with non-abelian coefficients. We then extend it replacing a $G$-group $A$ with a $G$-category $\xX$, but also show that in some case the $G$-category case reduces to the 
$G$-group case (\prref{1.4}). Next, we show that involving $2$-dimensional category theory makes all constructions more natural. In particular the category $Z^1(G,\xX)$ of $1$-cocycles 
$G \to \xX$ becomes nothing but the category ${\rm PsNat}({\bf 1},\xX)$ of pseudo-natural transformations ${\bf 1}\to \xX$, and the reduction above can be obtained from a canonical factorization of pseudo-natural transformations ${\bf 1}\to \xX$.

\seref{2} presents a $G$-category structure on $\xX$ as a special (``split") monoid in the monoidal category ${\rm Fam}(\xX^\xX)$ of families of endofunctors of $\xX$. 
A $1$-cocycle $G\to \xX$ then becomes an action of $\xX$ on an object in $\xX$. 
The coproduct functor ${\rm Fam}(\cC) \to \cC$, in the case of $\cC$
 being the monoidal category $\xX^\xX$, transforms the monoid above into a monad on $\xX$, provided $\xX$ admits $G$-indexed coproducts. That monad, denoted by 
 $G(-)$, plays a crucial role in Galois descent theory. It is in fact extensively used in Galois theory of commutative rings, where, for a Galois extension $E/B$, $GE$ is the algebra of all maps from $G$ to $E$; however, as far as we know, it was never considered there as a monad. Note also, that involving the monoidal category ${\rm Fam}(\xX^\xX)$ (instead of just 
 $\xX^\xX$) is not absolutely necessary, but makes all constructions more elegant since the monoidal structure of ${\rm Fam}(\xX^\xX)$ is automatically distributive with respect to coproducts, while in $\xX^\xX$ it only happens for coproducts of very special objects. 
 \seref{2} ends with \thref{2.5}, according to which a $1$-cocycle $G\to \xX$ is nothing but a $G(-)$-algebra.
 
\seref{3} begins by comparing split and connected monoids in ${\rm Fam}(\cC)$,
where ``split" (as above) means being ``a family of unit objects" while ``connected" means 
being ``inside $\cC$", i.e. being ``a one-member family". This allows to construct a canonical morphism from $G(-)$ to any monad $T$ on $\xX$ that is invariant (in a suitable sense) under the 
$G$-action, and $T$ is said to be a $G$-Galois monad if that canonical morphism is an isomorphism. A monadic adjunction $(F,U,\eta,\varepsilon)$ from $\xX$ to $\yY$ is then called a $G$-Galois adjunction if it determines a $G$-Galois monad on $\xX$. The main descent theorem (\thref{3.8}) that establishes and describes the equivalence $\yY\sim Z^1(G,\xX)$ for a Galois adjunction, is a straightforward consequence of monadicity in this context. In particular $Z^1(G,\xX)$ plays the role of the category of descent data. The second main result (\thref{3.9}) describes, using \thref{3.8} and abovementioned \prref{1.4}, the full subcategory $\yY[Y_0]$ of $\yY$ with objects all 
$Y$ in $\yY$ with $U(Y) \cong U(Y_0)$ for a fixed $Y_0$, as the category of $1$-cocycles 
$G \to\End(U(Y_0))$, where the monoid $\End(U(Y_0))$ is equipped with a $G$-action induced (in a suitable sense) by the $1$-cocycle $\xi_0$ corresponding to $Y_0$. Accordingly, \thref{3.9} also describes the set of isomorphic classes of objects in $\yY[Y_0]$, presented its bijection with 
$H^1(G,\End(U(Y_0))_{\xi_0}) = H^1(G,\Aut(U(Y_0))_{\xi_0})$.

\seref{4} applies our results to the adjunction 
$(E\hbox{-}{\bf \rm Mod})^{\rm op}\to (B\hbox{-}{\bf \rm Mod})^{\rm op}$
determined by a homomorphism $p:\ B\to E$ of commutative rings, where $E$ is equipped with a right $G$-action making $(E\hbox{-}{\bf \rm Mod})^{\rm op}$ a $G$-category. We show that this adjunction is a $G$-Galois adjunction if and only if $p :\ B \to E$ is a $G$-Galois extension of commutative rings in the sense of \cite{CHR}; in particular this implies that $G$ must be finite. After that we present the Galois descent/cohomology results obtained as direct translations of Theorems \ref{th:3.8}
and \ref{th:3.9}. The rest of \seref{4} is devoted to three classical examples. Much more examples could be presented of course, but we only wanted to explain how the general theory works in practice.

Finally, let us mention that it is not really clear how old is the idea of Galois descent. Apart from Grothendieck's work, relatively recent sources (among many others) are \cite{S2} (mentioning ``descent for forms"), \cite{KO}, \cite{Gre}, \cite{Jah}. As it was pointed out to the third named author during his talk in Budapest by Tam\'as Szamuely, Galois descent is also mentioned by J.-P. Serre in \cite{S1} with a reference to \cite{W}, although Serre himself also mentions there an older work of F. Ch\^atelet.

\section{$G$-categories and cohomology}\selabel{1}
\setcounter{equation}{0}
\subsection*{Ordinary non-abelian cohomology of groups}\hfill\break
Let $G$ be a group, and $A$ a $G$-module - which is the same as a module over the
integral group ring $\ZZ[G]$. Classically, a map $\varphi:\ G\to A$ is said to be a 1-{\it cocycle}
if
\begin{equation}\eqlabel{1.1}
\varphi(hg)=\varphi(h)+h\varphi(g)
\end{equation}
for all $g,h\in G$. The set $Z^1(G,A)$ of all $1$-cocycles forms an abelian group under
the pointwise addition, and the map $A\to Z^1(G,A)$ defined by
$a\mapsto (g\mapsto ga-a)$ is a group homomorphism. 
Denoting the image of that map by $B^1(G,A)$ (=the group of {\it boundaries}) one then defines the 
$1$-dimensional cohomology group $H^1(G,A)$ as the quotient group
\begin{equation}\eqlabel{1.2}
H^1(G,A)=Z^1(G,A)/B^1(G,A).
\end{equation}
More generally, this extends to the {\it non-abelian} context as follows:\\
$\bullet$ First of all the non-abelian version of the notion of $G$-module is to be chosen. Since a $G$-module is the same as a $G$-object in the category of abelian groups, the obvious candidate is a $G$-group, i.e. a $G$-object in the category of groups. Such a $G$-group $A$ is a group $A$ equipped with a $G$-action satisfying
\begin{equation}\eqlabel{1.3}
g(ab)=g(a)g(b)
\end{equation}
for all $g\in G$ and $a,b\in A$. Equivalently, a $G$-group is an internal group (=group object) in the category of $G$-sets.\\
$\bullet$ We then repeat the definition of a $1$-cocycle $\varphi:\ G\to A$ with
\begin{equation}\eqlabel{1.4}
\varphi(hg)=(\varphi(h))(h\varphi(g))
\end{equation}
instead of \equref{1.1}. However, now the set $Z^1(G,A)$ does not have any natural group
structure - it is just a pointed set with the trivial group homomorphism $G\to A$ being the distinguished point.\\
$\bullet$ Yet, the group $A$ acts on $Z^1(G,A)$ via
\begin{equation}\eqlabel{1.5}
(a\varphi)(g)=a(\varphi(g))(ga)^{-1},
\end{equation}
and we can now define the non-abelian $H^1(G,A)$ as the pointed set
\begin{equation}\eqlabel{1.6}
H^1(G,A)=Z^1(G,A)/A
\end{equation}
of orbits.

Another well-known way to do this is to regard $Z^1(G,A)$ as a groupoid, and then define 
$H^1(G,A)$ as the set
\begin{equation}\eqlabel{1.7}
H^1(G,A)=\pi_0(Z^1(G,A))
\end{equation}
of its connected components. Accordingly the objects of $Z^1(G,A)$ are $1$-cocycles
$G\to A$, and a morphism $\varphi\to \psi$ is an element $a$ in $A$ such that
$a\varphi=\psi$ under the action \equref{1.5}.

\subsection*{From $G$-groups to $G$-categories}\hfill\break
We are now going to replace a $G$-group $A$ by a $G$-category X defined as a $G$-object in the category $\Cat$ of all categories. We will hence consider X as an ordinary category equipped with a 
$G$-indexed family of functors 
\begin{equation}\eqlabel{1.8}
g(-):\ \xX\to \xX
\end{equation}
with $1(-)=1_\xX$ and $(hg)(-)=h(-)g(-)$. We introduce

\begin{definition}\delabel{1.1}
Let $G$ be a group and $\xX$ a $G$-category. Then:
\begin{enumerate}
\item[(a)] A $1$-cocycle $(X,\xi):\ G\to \xX$ is an object $X$ in $\xX$ together with a family
$\xi=(\xi_g)_{g\in G}$ of morphisms $\xi_g:\ g(X)\to X$ in $\xX$ satisfying
$\xi_1=1_X$ and $\xi_{hg}=\xi_h h(\xi_g)$ for all $g,h\in G$.
\item[(b)] A morphism $f:\ (X,\xi)\to (X',\xi')$ of $1$-cocycles is a morphism $f:\ X\to X'$
in $\xX$ such that $f\xi_g=\xi'_g g(f)$, for all $g\in G$; the thus obtained category
of $1$-cocycles $G\to \xX$ will be denoted by $Z^1(G,\xX)$.
\item[(c)] The isomorphism class of an object $(X,\xi)$ in $Z^1(G,\xX)$ will be denoted by 
$[X,\xi]$, and the set of all such classes by $H^1(G,\xX)$; we will call it the $1$-dimensional cohomology set of the group $G$ with coefficients in the $G$-category.
\end{enumerate}
\end{definition}

This indeed generalizes the definition for groups, but we have to make

\begin{remark}\relabel{1.2}
(a) Let $(X,\xi):\ G\to \xX$ be a $1$-cocycle. Then each $\xi_g$ is an isomorphism.
Indeed, applying $\xi_1=1_X$ and $\xi_{hg}=\xi_h h(\xi_g)$ to the case $h=g^{-1}$,
we conclude that each $\xi_{g^{-1}}$ is a split epimorphism and each
$g^{-1}(\xi_g)$ is a split monomorphism; since every element of $G$ can be presented as a 
$g^{-1}$, and all functors of the form $g(-)$ ($g \in G$) are isomorphisms, this tells us that each 
$\xi_g$ is a split epimorphism and a split monomorphism at the same time, i.e. each $\xi_g$ is an isomorphism. This yields an alternative definition of a $1$-cocycle:
if we require each $\xi_g$ to be an isomorphism (or just $\xi_1$ to be an isomorphism), 
then $\xi_1=1_X$ can be deduced from $\xi_{hg}=\xi_h h(\xi_g)$.
Indeed, applying it to $g = h = 1$ we obtain $\xi_1\xi_1=\xi_1$, and so $\xi_1$ is an identity whenever it is an isomorphism. In particular, when $\xX$ is a groupoid (i.e. every morphism in it is an isomorphism) we can simply omit the condition $\xi_1=1_X$ in \deref{1.1}(a).

(b) If $\xX$ is a groupoid, then obviously $Z^1(G,\xX)$ also is a groupoid, and in this case 
$H^1(G,\xX)$ is the same as $\pi_0(Z^1(G,\xX))$, similarly to \equref{1.7}.

(c) When $\xX$ has only one object, a $1$-cocycle $(X,\xi):\ G\to \xX$ becomes just a map
$\xi$ from $G$ to the monoid of endomorphisms of the unique object of $\xX$
written as $g\mapsto \xi_g$ satisfying $\xi_1=1$ and $\xi_{hg}=\xi_h h(\xi_g)$ 
above. If in addition that monoid was a group, then (by (a)) we can omit the first of these two equalities, and the second one becomes precisely \equref{1.4} if we write $\varphi(g)$ for $\xi_g$, etc.
Thus, when $\xX$ is a $G$-group $A$ regarded as a one object 
$G$-category, the category $Z^1(G,\xX)$ and the set $H^1(G,\xX)$ coincide with the ordinary (groupoid) $Z^1(G,A)$ and (set) $H^1(G,A)$ respectively.

(d) Unlike the case of a $G$-group, the general $H^1(G,\xX)$ has no distinguished point; in order to get such a point we have to take $\xX$ itself to be pointed, i.e. to have a distinguished object $X_0$ such that $g(X_0) = X_0$ for all $g\in G$.
\end{remark}

\subsection*{From $G$-categories to $G$-groups}\hfill\break
Given an object $X$ in a $G$-category $\xX$, let us compare:\\
$\bullet$ the $1$-cocycles $(X,\xi):\ G\to \xX$ with the same fixed $X$; their full subcategory
in $Z^1(G,\xX)$ will be denoted by $Z^1(G,\xX)_X$, and the corresponding subset in 
$H^1(G,\xX)$ will be denoted by $H^1(G,\xX)_X$;\\
$\bullet$ the $1$-cocycles $G\to \End(X)_\rho$, where $\End(X)_\rho$
is the endomorphism monoid of $X$ considered as a category (= the full subcategory in $X$ with the unique object $X$) and equipped with a $G$-category structure $\rho$;\\
$\bullet$ the $1$-cocycles $G\to \Aut(X)_\rho$, where $\Aut(X)_\rho$ is the automorphism group 
$\Aut(X)$ of $X$ in $\xX$ equipped with a $G$-group structure $\rho$.

\begin{observation}\label{obs1.3}
Clearly the $G$-category structure of $\xX$ restricts to a $G$-category structure on 
$\End(X)$ if and only if the object $X$ is $G$-invariant. However there is another way to make such a structure. For any $1$-cocycle $(X,\xi):\ G\to \xX$ (with the same $X$), $g\in G$, and an endomorphism
$a:\ X\to X$, we define $ga=\xi_g g(a)(\xi_g)^{-1}$ (writing $ga$ for the new 
$G$-action in distinction from the old $g(a)$). We will denote this structure by $\rho(\xi)$,
and write $Z^1(G,\End(X)_\xi)$ for the corresponding category of $1$-cocycles and
$H^1(G,\End(X)_\xi)$ for the corresponding $1$-dimensional cohomology set respectively. The action 
$\rho(\xi)$ clearly restricts to a $G$-group structure on $\Aut(X)$; we will then write
$Z^1(G,\Aut(X)_\xi)$ and $H^1(G,\Aut(X)_\xi)$ respectively. Of course $Z^1(G,\Aut(X)_\xi)$
is just the groupoid whose morphisms are all isomorphisms in 
$Z^1(G,\End(X)_\xi)$, and therefore $H^1(G,\Aut(X)_\xi)$ is the same as $H^1(G,\End(X)_\xi)$
\end{observation}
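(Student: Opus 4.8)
The plan is to check the three assertions of the observation one at a time; each is a short diagram chase, so the only real care point will be keeping the ambient $G$-action $g(-)$ on $\xX$ notationally distinct from the conjugated action $\rho(\xi)$ built on $\End(X)$, together with appealing to \reref{1.2}(a) whenever invertibility of the $\xi_g$ is needed.

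For the \emph{if and only if}: since $\End(X)$ is by definition the one-object full subcategory of $\xX$ on $X$, a $G$-category structure of $\xX$ restricts to $\End(X)$ exactly when every functor $g(-)\colon\xX\to\xX$ sends $X$ to $X$, i.e.\ exactly when $X$ is $G$-invariant; and in that case the identities $1(-)=1_\xX$ and $(hg)(-)=h(-)g(-)$ restrict verbatim, so what one gets is again a $G$-category structure. This is the ``obvious'' restriction referred to at the start of the observation.

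Next I would fix a $1$-cocycle $(X,\xi)\colon G\to\xX$ and verify that $ga=\xi_g\,g(a)\,(\xi_g)^{-1}$ really defines a $G$-category structure $\rho(\xi)$ on $\End(X)$. First, this is well defined, because by \reref{1.2}(a) each $\xi_g$ is an isomorphism, so $g(a)\colon g(X)\to g(X)$ is conjugated into an endomorphism of $X$. Since $\End(X)$ has a single object, a functor on it is just a monoid endomorphism, and $g(ab)=\xi_g\,g(a)\,g(b)\,(\xi_g)^{-1}=(ga)(gb)$ and $g1_X=\xi_g\,1_{g(X)}\,(\xi_g)^{-1}=1_X$ follow from functoriality of the old $g(-)$. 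The two $G$-action identities then come out as $1a=\xi_1\,1(a)\,(\xi_1)^{-1}=a$ (using $\xi_1=1_X$) and, using $\xi_{hg}=\xi_h h(\xi_g)$, the identity $(hg)(-)=h(-)g(-)$, and functoriality of $h(-)$, as $(hg)a=\xi_{hg}\,(hg)(a)\,(\xi_{hg})^{-1}=\xi_h\,h\!\bigl(\xi_g\,g(a)\,(\xi_g)^{-1}\bigr)(\xi_h)^{-1}=h(ga)$. Restricting precisely the same formulas to invertible $a$ shows $\rho(\xi)$ cuts down to a $G$-group structure on $\Aut(X)$, a conjugate of an isomorphism being an isomorphism.

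For the final claim I would first observe that $Z^1(G,\End(X)_\xi)$ and $Z^1(G,\Aut(X)_\xi)$ have literally the same objects: an object of the first is a family $(\psi_g)_{g\in G}$ of endomorphisms of $X$ with $\psi_1=1_X$ and $\psi_{hg}=\psi_h\,(h\,\psi_g)$ (the action now being $\rho(\xi)$), and \reref{1.2}(a), applied to the one-object $G$-category $\End(X)_{\rho(\xi)}$, forces each $\psi_g$ to be invertible, hence to lie in $\Aut(X)$. On morphisms, the forgetful functor $Z^1(G,\xX')\to\xX'$ reflects isomorphisms for every $G$-category $\xX'$ --- from $b\psi_g=\psi'_g\,g(b)$ with $b$ invertible one reads off $b^{-1}\psi'_g=\psi_g\,g(b^{-1})$, so $b^{-1}$ is again a morphism of cocycles and is inverse to $b$ there --- and with $\xX'=\End(X)_{\rho(\xi)}$ this says a morphism of $1$-cocycles into $\End(X)_\xi$ is an isomorphism precisely when its underlying endomorphism of $X$ is invertible, which is exactly the extra requirement on morphisms of $1$-cocycles into $\Aut(X)_\xi$. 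Since $Z^1(G,\Aut(X)_\xi)$ is a groupoid (by \reref{1.2}(b), $\Aut(X)$ being one), it is therefore precisely the subgroupoid of isomorphisms of $Z^1(G,\End(X)_\xi)$; hence two objects are isomorphic in the one iff they are isomorphic in the other, and since the object sets agree this gives $H^1(G,\End(X)_\xi)=H^1(G,\Aut(X)_\xi)$. I do not expect any genuine obstacle here: everything rests on \reref{1.2}(a) and on carefully not conflating $g(-)$ with $\rho(\xi)$.
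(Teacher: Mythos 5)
Your verification is correct and is exactly the routine check the paper leaves implicit: Observation \ref{obs1.3} is stated without proof, and your direct computations (well-definedness and the two action identities for $\rho(\xi)$ via $\xi_1=1_X$, $\xi_{hg}=\xi_h h(\xi_g)$, plus the appeal to \reref{1.2}(a) to see that cocycles into $\End(X)_\xi$ automatically land in $\Aut(X)$ and that isomorphisms of cocycles are detected on underlying endomorphisms) are precisely the intended argument. No gaps.
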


After this a straightforward calculation proves

\begin{proposition}\prlabel{1.4}
Let $(X,\xi):\ G\to \xX$ be a fixed $1$-cocycle. The assignment
$\varphi\mapsto \bigl(X, (\varphi(g)\xi_g)_{g\in G}\bigr)$ extends to a category isomorphism
$Z^1(G,\End(X)_\xi)\cong Z^1(G,\xX)_X$, and in particular
$$H^1(G,\Aut(X)_\xi)= H^1(G,\End(X)_\xi)\cong H^1(G,\xX)_X.$$
\end{proposition}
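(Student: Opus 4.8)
The plan is to exhibit the assignment $\varphi\mapsto\bigl(X,(\varphi(g)\xi_g)_{g\in G}\bigr)$ as a functor and then produce an explicit inverse. First I would check that, given a $1$-cocycle $\varphi:\ G\to\End(X)_\xi$, the family $\zeta=(\varphi(g)\xi_g)_{g\in G}$ really is a $1$-cocycle $(X,\zeta):\ G\to\xX$ in the sense of \deref{1.1}(a). This amounts to two verifications: $\zeta_1=\varphi(1)\xi_1=1_X\cdot 1_X=1_X$ (using the cocycle normalization for $\varphi$ and $\xi_1=1_X$ from \deref{1.1}(a)), and the relation $\zeta_{hg}=\zeta_h\,h(\zeta_g)$. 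For the latter I would expand $\zeta_{hg}=\varphi(hg)\xi_{hg}$, use $\varphi(hg)=\varphi(h)\cdot(h\varphi(g))$ where $h\varphi(g)=\xi_h\,h(\varphi(g))\,\xi_h^{-1}$ is the twisted action $\rho(\xi)$ from Observation~\ref{obs1.3}, and use $\xi_{hg}=\xi_h\,h(\xi_g)$; after cancelling $\xi_h^{-1}\xi_h$ in the middle and pulling $h(-)$ through the composite $\varphi(g)\xi_g$, the right-hand side collapses to $\varphi(h)\xi_h\,h(\varphi(g)\xi_g)=\zeta_h\,h(\zeta_g)$.

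Next I would treat morphisms. A morphism $\varphi\to\varphi'$ in $Z^1(G,\End(X)_\xi)$ is an endomorphism $f:\ X\to X$ (necessarily an automorphism, being a morphism in a category whose objects are cocycles into the \emph{monoid}, with invertibility forced exactly as in Remark~\ref{re:1.2}(a)) satisfying $f\varphi(g)=\varphi'(g)\,(gf)$ where $gf=\xi_g\,g(f)\,\xi_g^{-1}$. I would check that the \emph{same} morphism $f$ is a morphism $(X,\zeta)\to(X,\zeta')$ in $Z^1(G,\xX)_X$, i.e. $f\zeta_g=\zeta'_g\,g(f)$: indeed $f\zeta_g=f\varphi(g)\xi_g=\varphi'(g)\,(gf)\,\xi_g=\varphi'(g)\,\xi_g\,g(f)\,\xi_g^{-1}\xi_g=\varphi'(g)\xi_g\,g(f)=\zeta'_g\,g(f)$. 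Functoriality (preservation of identities and composites) is immediate since the assignment is the identity on the underlying morphisms.

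For the inverse functor I would send a $1$-cocycle $(X,\zeta)\in Z^1(G,\xX)_X$ to the family $\varphi(g):=\zeta_g\,\xi_g^{-1}$, which makes sense because every $\xi_g$ is an isomorphism by Remark~\ref{re:1.2}(a), and send a morphism $f$ to itself; the verification that this lands in $Z^1(G,\End(X)_\xi)$ is the same computation run backwards. Since both assignments are the identity on underlying morphisms of $X$, the two composites are visibly the identity functors, so we get the claimed isomorphism $Z^1(G,\End(X)_\xi)\cong Z^1(G,\xX)_X$ of categories. Passing to isomorphism classes gives $H^1(G,\End(X)_\xi)\cong H^1(G,\xX)_X$, and the equality $H^1(G,\Aut(X)_\xi)=H^1(G,\End(X)_\xi)$ is already recorded in Observation~\ref{obs1.3}.

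The only genuinely delicate point is the bookkeeping in the cocycle identity: one must be careful that $g(-)$ is a \emph{functor}, so $g(\varphi(g')\xi_{g'})=g(\varphi(g'))\,g(\xi_{g'})$, and that the twisted action $\rho(\xi)$ is defined with $\xi_g$ on the left and $\xi_g^{-1}$ on the right, so that the conjugating factors telescope correctly against $\xi_{hg}=\xi_h\,h(\xi_g)$. Everything else is formal. I therefore expect no real obstacle beyond writing the middle computation without sign/placement errors, which is exactly the ``straightforward calculation'' the text alludes to.
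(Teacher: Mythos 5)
Your proof is correct and is exactly the ``straightforward calculation'' the paper has in mind: the forward cocycle verification using the twisted action $\rho(\xi)$ and $\xi_{hg}=\xi_h\,h(\xi_g)$, the identity-on-morphisms comparison of the two compatibility conditions, and the inverse assignment $\varphi(g)=\zeta_g\xi_g^{-1}$, legitimate since each $\xi_g$ is invertible by Remark~\ref{re:1.2}(a). One small slip, harmless because you never actually use it: a morphism in $Z^1(G,\End(X)_\xi)$ need \emph{not} be an automorphism --- Observation~\ref{obs1.3} explicitly singles out $Z^1(G,\Aut(X)_\xi)$ as the subgroupoid of isomorphisms inside $Z^1(G,\End(X)_\xi)$, and the argument of Remark~\ref{re:1.2}(a) forces invertibility only of the cocycle components $\xi_g$, not of morphisms between cocycles.
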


\begin{remark}\relabel{1.5}
\prref{1.4} {\it almost} reduces the description of $H^1(G,\xX)$ 
to the case where $\xX$ is a $G$-group. Since $H^1(G,\xX)$ 
obviously is the disjoint union of $H^1(G,\xX)_X$ for all objects $X$ from any fixed skeleton of the category $\xX$, and each $H^1(G,\xX)_X$ is either empty, or bijective to
$H^1(G,\Aut(X)_\xi)$ for some $\xi$, it gives a bijection of the form
\begin{equation}\eqlabel{1.9}
H^1(G,\xX)\cong \sqcup_{(X,\xi)\in \Xi} H^1(G,\Aut(X)_\xi),
\end{equation}
where $\Xi$ is any set of $1$-cocycles $(X,\xi):\ G\to \xX$ 
satisfying the following conditions:

(a) if $(X,\xi)$ and $(X',\xi')$ are in $\Xi$, then $X$ and $X'$ are not isomorphic in
$\xX$;

(b) $\Xi$ is a maximal set with the property (a).
\end{remark}

\subsection*{Involving 2-dimensional category theory}\hfill\break
Consider the following two questions:\\
{\bf Question A}: What is an appropriate notion of a morphism of $G$-categories?\\
{\bf Question B}: Are the ordinary and the generalized notions of 1-cocycle merely technical, or they occur as special cases of natural categorical constructions?

Considering $G$-categories as $G$-objects in $\Cat$, or as internal categories in the category 
$\Sets^G$ of $G$-sets, one would of course define the morphisms between them as the 
G-functors, i.e. as the morphisms of $G$-objects in $\Cat$, or, equivalently, as internal functors in 
$\Sets^G$. However, thinking of a $G$-category as a functor from $G$ to $\Cat$, and having in mind that $\Cat$ is a 2-dimensional category, we have a choice between the natural transformations Ð which will give us the $G$-functors again, and the so-called {\it pseudo-natural transformations}. The second choice suggests the following answer to Question A:

\begin{definition}\delabel{1.6}
Let $\cC$ and $\dD$ be $G$-categories. A morphism from $\cC$ to $\dD$ is a pair 
$(F,\xi)$, in which $F$ is a functor from $\cC$ to $\dD$ considered as ordinary categories, and
$\xi=(\xi_{g,C})_{g\in G,C\in {\rm Ob}(\cC)}$ a family of isomorphisms
$\xi_{g,C}:\ g(F(C))\to F(g(C))$ natural in $C$ and satisfying
$\xi_{1,C}={\bf 1}_{F(C)}$ and $\xi_{hg,C}=\xi_{h,g(C)}h(\xi_{g,C})$ for all objects $C\in \cC$
and all $g,h\in G$.
\end{definition}

Furthermore, there are morphisms between pseudo-natural transformations called modifications, and for given $\cC$ and $\dD$, they together form a category ${\rm PsNat}(\cC,\dD)$. The objects of 
${\rm PsNat}(\cC,\dD)$ are thus the morphisms from $\cC$ to $\dD$ in the sense of \deref{1.6}, and a morphism $f:\ (F,\xi)\to (F',\xi')$ in ${\rm PsNat}(\cC,\dD)$ is a natural transformation
$f:\ F\to F'$ such that
\begin{equation}\eqlabel{1.10}
f_{g(C)}\xi_{g,C} =\xi'_{g,C}g(f_C).
\end{equation}
In particular if we take $\cC={\bf 1}$ (the category with only one object and one morphism) and 
write $\xX$ instead of $\dD$, then ${\rm PsNat}(\cC,\dD)$ becomes nothing but what we called 
$Z^1(G,\xX)$, yielding a good affirmative answer to our Question B.

\subsection*{2-Functoriality of $Z^1(G,\xX)$ in $\xX$}\hfill\break
In order to make $\xX\mapsto Z^1(G,\xX)$
a 2-functor we need to make one more step into
2-dimensional category theory:\\
Given three $G$-categories $\cC,\dD,\eE$, we construct the composition functor
$$\ot=\ot_{\cC,\dD,\eE}:\ {\rm PsNat}(\dD,\eE)\times {\rm PsNat}(\cC,\dD)\to
{\rm PsNat}(\cC,\eE)$$
as follows\\
$\bullet$ $(\ul{F},\ul{\xi})\ot (F,\xi)=(\ul{F}F,\xi\ot \ul{\xi})$ on objects, where $\ot$ is the
{\it argumentwise pasting operation}, i.e. $\xi\ot \ul{\xi}$ is defined by
$(\xi\ot \ul{\xi})_{g,C}= (\ul{F}(\xi_{g,C}))\ul{\xi}_{g,F(C)}$;\\
$\bullet$ on morphisms the functor $\ot$ is the ordinary {\it horizontal composition} of natural transformations, i.e. for $\ul{f}:\ (\ul{F},\ul{\xi})\to (\ul{F}',\ul{\xi}')$ in ${\rm PsNat}(\dD,\eE)$
and $f:\ (F,\xi)\to (F',\xi')$ in ${\rm PsNat}(\cC,\dD)$, the composite
$\ul{f}\ot f:\ (\ul{F},\ul{\xi})\ot (F,\xi)\to (\ul{F}',\ul{\xi}')\ot (F',\xi')$ is defined by
$(\ul{f}\ot f)_C= (\ul{f}_{F'(C)})(\ul{F}(f_C))$ (which is the same as $(\ul{F}'(f_C))(\ul{f}_{F(C)})$).

The collection of all $G$-categories, their pseudo-natural transformations, and modifications with the composition $\ot$ and the obvious {\it identity pseudo-natural transformations}
$1_{\cC}: \cC\to \cC$ (for all $\cC$) form a 2-dimensional category $G\hbox{-}\Cat$,
in which the hom-categories ${\rm hom}_{G\hbox{-}\Cat}(\cC,\dD)$ are the categories
${\rm PsNat}(\cC,\dD)$. In particular, in the notation above, we have
\begin{equation}\eqlabel{1.11}
Z^1(G,\xX)= {\rm hom}_{G\hbox{-}\Cat}({\bf 1},\xX)={\rm PsNat}({\bf 1},\xX),
\end{equation}
yielding a 2-functor
\begin{equation}\eqlabel{1.12}
Z^1(G,-):\ G\hbox{-}\Cat\to \Cat.
\end{equation}

\subsection*{A remark on $Z^0(G,\xX)$ and $H^0(G,\xX)$}\hfill\break
Let us repeat our constructions ``decreasing the dimension by one". That is, we take the
$G$-category $\xX$ to be a {\it discrete} category, i.e. a category with no nonidentity morphisms,
$\xX={\rm Dis}(S)$ - denoting the $G$-set of its objects by $S$. 
Then it is easy to see that the 
pseudo-natural transformations above become just the natural transformations, the modifications become identities, i.e. $Z^1(G,{\rm Dis}(S))$ is discrete, and we have
\begin{equation}\eqlabel{1.13}
Z^1(G,{\rm Dis}(S))\cong {\rm Dis}\{s\in S~|~gs=s~~{\rm for~all}~~g\in G\}.
\end{equation}
Since the classical definitions of $Z^0(G,S)$ and $H^0(G,S)$ say
\begin{equation}\eqlabel{1.14}
H^0(G,S)=Z^0(G,S)= {\rm Dis}\{s\in S~|~gs=s~~{\rm for~all}~~g\in G\},
\end{equation}
we conclude that $Z^1(G,\xX)$ and $H^1(G,\xX)$ defined for an arbitrary $G$-category $\xX$, generalize not only $Z^1(G,A)$ and $H^1(G,A)$ for a $G$-group $A$, but also 
$Z^0(G,S)$ and $H^0(G,S)$ for a $G$-set $S$.

\subsection*{A categorical construction behind \prref{1.4}}\hfill\break
Any functor $F:\ \cC\to \dD$ canonically factorizes as
\begin{equation}\eqlabel{1.15}
\xymatrix{
\cC\ar[r]^(.4){F^{(1)}}&{\rm\bf fact}(F)\ar[r]^(.65){F^{(2)}}&\dD}
\end{equation}
where:\\
$\bullet$ the objects in ${\rm\bf fact}(F)$ are as $\cC$ and the morphisms as $\dD$, that is\\
${\rm hom}_{{\rm\bf fact}(F)}(C,C')={\rm hom}_{\dD}(F(C),F(C'))$;\\
$\bullet$  the functor $F^{(1)}$ is defined by $F^{(1)}(C) = C$ on objects and 
$F^{(1)}(u) = F(u)$ on morphisms;\\
$\bullet$  the functor $F^{(2)}$ is defined by $F^{(2)}(C) = F(C)$ on objects and 
$F^{(2)}(u) = u$ on morphisms.\\

If $\cC$ and $\dD$ were $G$-categories and $F$ a $G$-functor, then, since the factorization 
\equref{1.15} is functorial, it would make ${\rm\bf fact}(F)$ a $G$-category and $F^{(1)}$ and 
$F^{(2)}$ $G$-functors. More generally, if $(F,\xi):\ \cC\to \dD$ is a morphism of
$G$-categories in the sense of \deref{1.6}, then ${\rm\bf fact}(F)$ can still be regarded
as a $G$-category and there are canonical $\xi^{(1)}$ and $\xi^{(2)}$ making
$(F^{(1)},\xi^{(1)})$ and $(F^{(2)},\xi^{(2)})$ morphisms of $G$-categories. Explicitly:\\
$\bullet$ For each $g\in G$, the functor $g(-):\ {\rm\bf fact}(F)\to {\rm\bf fact}(F)$ is defined as 
$g(-):\ C\to C$ on objects and via
\begin{equation}\eqlabel{1.16}
\xymatrix{
gF(C)\ar[rr]^{\xi_{g,C}}\ar[d]_{g(a)}&&Fg(C)\ar[d]^{g(a)}\\
gF(C')\ar[rr]^{\xi_{g,C'}}&&Fg(C')}
\end{equation}
on morphisms; here the first vertical arrow represents the image under 
$g(-):\ D\to D$ of an arbitrary $a :\ F(C) \to F(C')$ in $\dD$, but the second one is the new $g(a)$ to be defined as a morphism from $Fg(C)$ to $Fg(C')$ in $\dD$.\\
$\bullet$ Since $\xi_{g,C}$ is natural in $C$, it is easy to check that the diagram
\begin{equation}\eqlabel{1.17}
\xymatrix{
\cC\ar[rr]^{F^{(1)}}\ar[d]_{g(-)}&& {\rm\bf fact}(F)\ar[d]^{g(-)}\\
\cC\ar[rr]^{F^{(1)}}&& {\rm\bf fact}(F)}
\end{equation}
commutes for every $g\in G$, and so $\xi^{(1)}$ is to be defined as the appropriate family of identity morphisms.\\
$\bullet$ For a morphism $a:\ C\to C'$ in  {\rm\bf fact}(F), the diagram \equref{1.16} can now be rewritten as
\begin{equation}\eqlabel{1.18}
\xymatrix{
gF^{(2)}(C)\ar[rr]^{\xi_{g,C}}\ar[d]_{gF^{(2)}(a)}&& F^{(2)}g(C)\ar[d]^{F^{(2)}g(a)}\\
gF^{(2)}(C')\ar[rr]^{\xi_{g,C'}}&& F^{(2)}g(C')}
\end{equation}
and accordingly $\xi^{(2)}$ is defined as the family of isomorphisms
$$\xi_{g,C}:\ gF^{(2)}(C)=gF(C)\to Fg(C)= F^{(2)}g(C).$$

\begin{definition}\delabel{1.7}
Let $(F,\xi):\ \cC\to \dD$ be a morphism of $G$-categories. With notation as above, we
will say that $(F,\xi)= (F^{(2)},\xi^{(2)})\ot (F^{(1)},\xi^{(1)})$ is the canonical factorization of
$(F,\xi)$.
\end{definition}

\begin{example}\exlabel{1.8}
Let $(X,\xi):\ G\to \xX$ be a $1$-cocycle as in \prref{1.4} and 
$(F,\xi)= (F^{(2)},\xi^{(2)})\ot (F^{(1)},\xi^{(1)})$ the canonical factorization of the 
corresponding morphism $(F,\xi):\ {\bf 1} \to \xX$. Then\\
(a) the category $ {\rm\bf fact}(F)$ is the same as the monoid $\End(X)$;\\
(b) the $G$-category structure $\rho(\xi)$ on $\End(X)$ described in 
Observation \ref{obs1.3} is the same as the $G$-category structure on $ {\rm\bf fact}(F)$ described now (\equref{1.16});\\
(c) the composite of the isomorphism
$Z^1(G,\End(X)_\xi)\cong Z^1(G,\xX)_X$ described in \prref{1.4} and the embedding
$Z^1(G,\xX)_X\to Z^1(G,\xX)$ is the same as the functor
$$Z^1(G, (F^{(2)},\xi^{(2)})):\ Z^1(G, {\rm\bf fact}(F))\to Z^1(G,\xX).$$
\end{example}

\section{1-Cocycles as algebras over a monad}\selabel{2}
\setcounter{equation}{0}
\subsection*{$G$-Category structures as monoids}\hfill\break
Given a category $\cC$, let $\Fam(\cC)$ be the {\it category of families} of objects in $\cC$. Recall that the objects of $\Fam(\cC)$ are all families $(A_i)_{i\in I}$ with $A_i\in \cC$,
and a morphism $(A_i)_{i\in I}\to (B_j)_{j\in J}$ 
is a pair $(f,u)$ in which $f:\ I\to J$ is a map of sets, and $u$ is a family of morphisms
$u_i :\ A_i \to B_{f(i)}$. If $\cC$ were a monoidal category
$\cC=(\cC,\ot,1,\alpha,\lambda,\rho)$, then so is $\Fam(\cC)=
(\Fam(\cC),\ot,1,\alpha',\lambda',\rho')$ with
\begin{equation}\eqlabel{2.1}
(A_i)_{i\in I}\ot (B_j)_{j\in J}= (A_i\ot B_j)_{(i,j)\in I\times J},
\end{equation}
with $1$ in $\Fam(\cC)$ being the one-member family corresponding to the $1$ in $\cC$, and with obvious $\alpha',\lambda',\rho'$. According to this description, a {\it monoid} in 
$(\Fam(\cC),\ot,1)$ can be identified with a system $((M_i)_{i\in I},e,m)$, in which:\\
$\bullet$ $I$ is an ordinary monoid;\\
$\bullet$ $m$ is a family of morphisms $m_{(i,j)}:\ M_i\ot M_j\to M_{ij}$ in $\cC$ 
satisfying the suitable associativity condition;\\
$\bullet$ $e$ is a morphism in $\cC$ from $1$ to $M_1$ (where the index $1$ of $M_1$ is the identity element of the monoid $I$) satisfying the suitable identity condition.\\
In particular, if $\cC=(\cC,\ot,1)$ was a strict monoidal category, and 
$(M_i)_{i\in I}$ an object in $\Fam(\cC)$ in which\\
$\bullet$ $ I$ is equipped with (an ordinary) monoid structure;\\
$\bullet$ $M_i\ot M_j=M_{ij}$ for all $i,j\in I$;\\
$\bullet$ $M_1=1$,\\
then the system $((M_i)_{i\in I},e,m)$, in which all $m_{(i,j)}$ and $e$ are identity morphisms, is a monoid in $\Fam(\cC)$ (although $\Fam(\cC)$ is not strict!); let us call such a monoid {\it split}. Comparing this notion with the notion of $G$-category considered in \seref{1}, we immediately observe:

Let $G$ be a group. To make a category $\xX$ a $G$-category is the same thing as to give a split monoid $(M_i)_{i\in I}$ in the monoidal category $\Fam(\xX^{\xX})$, where the monoidal 
structure on 
the category $\xX^{\xX} = (\xX^{\xX},\circ, 1_{\xX})$ of endofunctors of $\xX$, with $I = G$. According to the notation of \seref{1}, (the underlying object of) that monoid is to be written as
\begin{equation}\eqlabel{2.2}
(g(-))_{g\in G}.
\end{equation}

\subsection*{Split monoid actions as $1$-cocycles}\hfill\break
Let $M = (M,e,m)$ be a monoid in a monoidal category $\cC$. 
Recall that an $M$-action in $\cC$ is a pair $(X,\xi)$, where $X$ is an object in $\cC$ and 
$\xi:\ M\ot X\to X$ a morphism in $\cC$ satisfying the associativity and identity properties with respect to the monoid structure of $M$. We will also say that $\xi$ is an $M$-action on $X$. Consider the following

\begin{situation}\silabel{2.1}
We take:\\
$\bullet$ $\cC$ to be a strict monoidal category, but then replace it by the monoidal category $\Fam(\cC)$ as defined above.\\
$\bullet$ $M=((M_i)_{i\in I},e,m)$ a split monoid in $\Fam(\cC)$.\\
$\bullet$ $X$ an object in $\cC$ regarded as a one-member family, and therefore an object in $\Fam(\cC)$.
In this case an $M$-action on $X$ can be described as a family
$\xi=(\xi_i)_{i\in I}$ of morphisms $\xi_i:\ M_i\ot X\to X$ with
\begin{equation}\eqlabel{2.3}
\xi_1=1_X~~{\rm and}~~\xi_{ij}=\xi_i(M_i\ot \xi_j).
\end{equation}
\end{situation}

After that consider a further special case:
\begin{situation}\silabel{2.2}
We take $(\cC,\ot 1)=(\xX^\xX,\circ,1_X)$ (where $\xX$ is a $G$-category as in \seref{1}),
$(M_i)_{i\in I}$ to be the monoid \equref{2.2} , and $X$ an object in $\xX$ regarded as a constant functor
$\xX\to \xX$. We will say that $(X,\xi)$ is a $(g(-))_{g\in G}$-action on an object in $\xX$.
\end{situation}

Comparing the formulae \equref{2.3} with those in \deref{1.1}(a), we obtain:

\begin{proposition}\prlabel{2.3}
A pair $(X,\xi)=(X,(\xi_g)_{g\in G})$ is an object in $Z^1(G,\xX)$ if and only if it is a
$(g(-))_{g\in G}$-action (on $X$). This yields a canonical isomorphism between the 
category  $Z^1(G,\xX)$ of 1-cocycles $G\to \xX$ and the category of $(g(-))_{g\in G}$-actions on objects in $\xX$.
\end{proposition}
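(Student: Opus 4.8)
The plan is to unwind both sides of the claimed isomorphism to the same bookkeeping data and observe that the defining equations literally match. First I would fix notation: on one side an object of $Z^1(G,\xX)$ is a pair $(X,\xi)$ with $X\in\xX$ and $\xi=(\xi_g)_{g\in G}$ a family $\xi_g\colon g(X)\to X$ satisfying $\xi_1=1_X$ and $\xi_{hg}=\xi_h\,h(\xi_g)$ (\deref{1.1}(a)); on the other side a $(g(-))_{g\in G}$-action on $X$ is, by \siref{2.2} specialized from \siref{2.1}, a family $\xi=(\xi_g)_{g\in G}$ of morphisms $\xi_g\colon M_g\ot X\to X$ in $\cC=(\xX^\xX,\circ,1_\xX)$ satisfying \equref{2.3}. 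The essential observation is the translation of the monoidal structure: since $M_g=g(-)$ as an endofunctor and $X$ is viewed as the constant functor at $X$, the composite $M_g\circ X$ is the constant functor at $g(X)$, so a morphism $M_g\ot X\to X$ in $\xX^\xX$ is exactly a natural transformation from the constant functor at $g(X)$ to the constant functor at $X$, i.e.\ a single morphism $g(X)\to X$ in $\xX$. Under this identification the $M$-action associativity $\xi_{hg}=\xi_h(M_h\ot\xi_g)$ becomes $\xi_{hg}=\xi_h\circ h(\xi_g)$ (the whiskering $M_h\ot\xi_g$ is precisely applying the functor $h(-)$ to $\xi_g$), and the identity condition $\xi_1=1_X$ is unchanged because $M_1=1_\xX$.

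Second, having matched objects, I would check morphisms. A morphism $(X,\xi)\to(X',\xi')$ in $Z^1(G,\xX)$ is $f\colon X\to X'$ with $f\xi_g=\xi'_g\,g(f)$ for all $g$ (\deref{1.1}(b)); a morphism of $M$-actions is $f\colon X\to X'$ in $\cC$ — again a single morphism $X\to X'$ in $\xX$ since $X,X'$ are constant functors — compatible with the actions, which spells out to the same equation $f\circ\xi_g=\xi'_g\circ(M_g\ot f)=\xi'_g\circ g(f)$. Composition and identities are computed in $\xX$ on both sides, so the correspondence is functorial in both directions and visibly mutually inverse. I would then conclude that the assignment $(X,\xi)\mapsto(X,\xi)$, $f\mapsto f$ is an isomorphism of categories $Z^1(G,\xX)\cong$ (category of $(g(-))_{g\in G}$-actions on objects of $\xX$), which also gives the first assertion: $(X,(\xi_g))$ lies in $Z^1(G,\xX)$ iff it is such an action.

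The only genuine content — and thus the step I would present carefully rather than dismiss — is the identification of hom-sets of constant functors: that $\xX^\xX(\Delta_{g(X)},\Delta_X)\cong\xX(g(X),X)$ naturally, and that under it the whiskering operation $M_h\ot(-)$ on such transformations coincides with the action of the functor $h(-)$ on the corresponding morphism of $\xX$. Once this dictionary is in place, everything else is a direct comparison of \equref{2.3} against the equations of \deref{1.1}, exactly as flagged in the sentence ``Comparing the formulae \equref{2.3} with those in \deref{1.1}(a)'' immediately preceding the proposition; I anticipate no obstacle beyond being scrupulous that no strictness is silently used where $\Fam(\cC)$ fails to be strict (here it does not bite, since $X$ is a one-member family and all the relevant coherence isomorphisms are identities). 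Hence the proof is essentially a translation argument, and I would write it as such.
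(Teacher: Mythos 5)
Your route is the paper's own: the text offers nothing beyond ``Comparing the formulae \equref{2.3} with those in \deref{1.1}(a)'', and your object/morphism dictionary, including the observation that the whiskering $M_h\ot\xi_g$ is $h(\xi_g)$, is exactly that comparison made explicit. The one place where you assert more than is true is precisely the step you single out as the only real content: the identification $\xX^\xX(\Delta_{g(X)},\Delta_X)\cong\xX(g(X),X)$. A natural transformation between constant functors $\Delta_Y\to\Delta_X$ is a family of morphisms $Y\to X$ indexed by the objects of $\xX$, and naturality forces only that this family be constant on each connected component of $\xX$; so the hom-set is $\xX(Y,X)^{\pi_0(\xX)}$ rather than $\xX(Y,X)$. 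Consequently, for disconnected $\xX$ the category of $(g(-))_{g\in G}$-actions on the constant functor $\Delta_X$ is strictly larger than $Z^1(G,\xX)_X$: already for trivial $G$ and $\xX$ a disjoint union of two one-object categories, the endomorphism monoid of the action $(X,1)$ is $\End(X)\times\End(X)$ instead of $\End(X)$.

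To be fair, this imprecision is inherited from the paper rather than introduced by you: \siref{2.2} does say ``regarded as a constant functor''. The reading that rescues the statement is that the split monoid in $\Fam(\xX^\xX)$ acts on the object $X$ through evaluation of endofunctors at objects of $\xX$ (equivalently, one takes components at a single fixed object), so that each $\xi_g$ is by definition a single morphism $g(X)\to X$; with that reading your translation argument is complete, correct, and identical in substance to the paper's, and it is also what keeps the subsequent \thref{2.5} unproblematic, since $G(-)$-algebras live directly in $\xX$. In the paper's applications ($\xX=(E\hbox{-}\Mod)^{\rm op}$, which has a zero object and hence is connected) even the constant-functor reading causes no harm. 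So: same approach as the paper; just do not state the constant-functor hom-set identification in the generality in which you currently claim it, or add the connectedness (or evaluation-at-an-object) caveat where you invoke it.
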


\subsection*{Involving the coproduct functor $\Fam(\cC)\to \cC$}\hfill\break
In (say) \siref{2.1}, if $\cC$ had coproducts preserved by the functors
$C\ot -,~-\ot C:\ \cC\to \cC$ for every object $C$ in $\cC$, then the monoid structure of
$(M_i)_{i\in I}$ would of course induce a monoid structure on the coproduct
$\coprod_{i\in I} M_i$, and there would be a straightforward way to compare the actions of these two monoids. However having in mind \siref{2.2}, we cannot assume the preservation of coproducts by 
$C\ot -$ for every $C$; what we really need is the following:

\begin{proposition}\prlabel{2.4}
Let $\cC$ be a monoidal category with coproducts preserved by
$-\ot C:\ \cC\to \cC$ for each object $C$ in $\cC$, and $M=((M_i)_{i\in I},e,m)$
a monoid in $\Fam(\cC)$ such that $M_i\ot -:\ \cC\to \cC$ 
preserves coproducts for each $i\in I$. Then the monoid structure of 
$(M_i)_{i\in I}$ induces a monoid structure on the coproduct $\coprod_{i\in I} M_i$ and there is a canonical isomorphism between the category of $(M_i)_{i\in I}$-actions (i.e. $M$-actions) on one-member families in $\Fam(\cC)$ and the category of $\coprod_{i\in I} M_i$-actions in $\cC$.
\end{proposition}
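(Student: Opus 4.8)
The plan is to construct the monoid structure on $\coprod_{i\in I}M_i$ explicitly from the data of $M$, and then exhibit mutually inverse functors between the two categories of actions by direct manipulation of the universal property of coproducts. First I would fix notation: write $\iota_i : M_i \to \coprod_{k\in I} M_k =: \overline{M}$ for the coproduct injections, and abbreviate $\overline{M} = \coprod M_i$. The multiplication $\overline{m} : \overline{M}\otimes\overline{M}\to\overline{M}$ is obtained as follows. Since $-\otimes C$ preserves coproducts, $\overline{M}\otimes\overline{M} \cong \coprod_i (M_i \otimes \overline{M})$, and since each $M_i\otimes -$ preserves coproducts, $M_i\otimes\overline{M}\cong\coprod_j (M_i\otimes M_j)$; combining, $\overline{M}\otimes\overline{M}\cong\coprod_{(i,j)\in I\times I}(M_i\otimes M_j)$ with injections $\iota_i\otimes\iota_j$ (using that $\otimes$ on morphisms and the coherence isomorphisms are compatible with these identifications — this is where a little care with the strictness hypothesis, or rather its absence, pays off, but it is routine coherence). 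Then $\overline{m}$ is the unique morphism with $\overline{m}\,(\iota_i\otimes\iota_j) = \iota_{ij}\,m_{(i,j)}$ for all $i,j$, and the unit is $\overline{e} = \iota_1 e : 1\to M_1\to\overline{M}$. Associativity and unitality of $(\overline{M},\overline{e},\overline{m})$ follow from the corresponding axioms for $M$ by comparing the two composites on each injection $\iota_i\otimes\iota_j\otimes\iota_k$ (resp. $\iota_i$), where they reduce exactly to the split-monoid-in-$\Fam$ associativity/identity conditions; I would state this and leave the diagram-chase to the reader.

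Next I would set up the correspondence on actions. Given an $\overline{M}$-action $\xi : \overline{M}\otimes X\to X$ in $\cC$, define $\xi_i := \xi\,(\iota_i\otimes 1_X) : M_i\otimes X\to X$; conversely, given a family $(\xi_i)_i$ satisfying the $M$-action axioms (associativity $\xi_{ij} = \xi_i\,(M_i\otimes\xi_j)$ after the evident reassociation, and $\xi_1 = $ the unit condition via $e$), and using again that $-\otimes X$ preserves coproducts so that $\overline{M}\otimes X\cong\coprod_i(M_i\otimes X)$ with injections $\iota_i\otimes 1_X$, define $\xi$ to be the unique morphism with $\xi\,(\iota_i\otimes 1_X) = \xi_i$. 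These two constructions are visibly inverse to each other. The heart of the matter is checking that the family axioms for $(\xi_i)_i$ translate precisely into the single $\overline{M}$-action axioms for $\xi$: the unit axiom for $\xi$ composed with $\overline{e}\otimes 1_X = \iota_1 e\otimes 1_X$ restricts to the $\xi_1$-condition; the associativity pentagon for $\xi$, precomposed with the injection $\iota_i\otimes\iota_j\otimes 1_X$ into $\overline{M}\otimes\overline{M}\otimes X$, becomes — after using the formula for $\overline{m}$ on $\iota_i\otimes\iota_j$ and the preservation of coproducts by $M_i\otimes-$ — exactly the identity $\xi_{ij}\,(m_{(i,j)}\otimes 1_X) = \xi_i\,(M_i\otimes\xi_j)$, i.e. the $M$-action associativity.

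Finally I would extend the object-level bijection to a functor and check functoriality: a morphism of $M$-actions $(X,(\xi_i)_i)\to(X',(\xi'_i)_i)$ is a morphism $f: X\to X'$ with $f\xi_i = \xi'_i\,(M_i\otimes f)$ for all $i$; a morphism of $\overline{M}$-actions is an $f$ with $f\xi = \xi'\,(\overline{M}\otimes f)$. By precomposing the latter with the injections $\iota_i\otimes 1_X$ and using that $\overline{M}\otimes f$ restricts to $M_i\otimes f$ on each summand, the two conditions on $f$ are seen to be equivalent, so the assignment is a full, faithful, bijective-on-objects functor, hence an isomorphism of categories. The main obstacle, such as it is, is purely bookkeeping: one must keep the associativity constraints $\alpha$ of $\cC$ and the canonical isomorphisms $\overline{M}\otimes\overline{M}\cong\coprod_{(i,j)}M_i\otimes M_j$, $\overline{M}\otimes X\cong\coprod_i M_i\otimes X$ coherently aligned so that "restrict along $\iota_i\otimes\iota_j$" and "restrict along $\iota_i\otimes 1_X$" are well-defined and compatible — there is no genuine difficulty, only the need to be scrupulous about coherence since $\Fam(\cC)$ and $\cC$ are not assumed strict. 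I would handle this by fixing once and for all the coherence isomorphisms via the coproduct-preservation assumptions and then suppressing them from the notation, exactly as the paper does elsewhere.
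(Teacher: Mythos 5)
Your argument is correct and is exactly the standard construction the paper has in mind: the paper states \prref{2.4} without proof, treating as routine precisely the steps you carry out (decomposing $\overline{M}\otimes\overline{M}$ and $\overline{M}\otimes X$ via the two coproduct-preservation hypotheses, defining $\overline{m}$ and the action correspondence by restriction along the injections, and checking the axioms and morphism conditions summandwise). Nothing in your proposal deviates from or adds a gap to that implicit proof.
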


In \siref{2.2}, assuming the existence of $G$-indexed coproducts in $\xX$, all the required conditions hold since all the functors $g(-)$ ($g \in G$) are isomorphisms and therefore preserve coproducts. Here $\coprod_{i\in I} M_i= \coprod_{g\in G}g(-)$ being a monoid in $(\xX^\xX,\circ,1_{\xX})$
 is a monad on $\xX$; let us denote it by $G(-)$. From Propositions \ref{pr:2.3} and \ref{pr:2.4}, we obtain

\begin{theorem}\thlabel{2.5}
Let $G$ be a group and $\xX$ a $G$-category with $G$-indexed coproducts. Then there is a canonical isomorphism between the category $Z^1(G,\xX)$ of 1-cocycles $G\to \xX$ and the
category $\xX^{G(-)}$ of $G(-)$-algebras. For a 1-cocycle $(X,\xi):\ G\to \xX$ with
$\xi=(\xi_g)_{g\in G}$, the corresponding  $G(-)$-algebra is the pair $(X,\xi_*)$
with the same $X$ and $\xi_*:\ G(X)=\coprod_{g\in G} g(X)\to X$ induced by all
$\xi_g:\ g(X)\to X$ ($g\in G$).
\end{theorem}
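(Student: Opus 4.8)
The plan is to chain together the two isomorphisms already established in \prref{2.3} and \prref{2.4}, after checking that \prref{2.4} is applicable in the situation of \siref{2.2}. First I would observe that the category $\xX^{\xX}=(\xX^{\xX},\circ,1_\xX)$ is strict monoidal, and that the monoid \equref{2.2}, namely $(g(-))_{g\in G}$, is precisely the split monoid in $\Fam(\xX^{\xX})$ corresponding to the $G$-category structure of $\xX$. By \prref{2.3}, the objects of $Z^1(G,\xX)$ are exactly the $(g(-))_{g\in G}$-actions on objects of $\xX$, viewed as one-member families, and this identification is functorial; so it suffices to identify the category of such actions with $\xX^{G(-)}$.

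Next I would verify the hypotheses of \prref{2.4} with $\cC=\xX^{\xX}$ and $M=(g(-))_{g\in G}$. The needed facts are: $\xX^{\xX}$ has coproducts (these are computed pointwise, since $\xX$ has $G$-indexed coproducts — and only $G$-indexed ones are needed because the index set is $G$); the functor $-\circ C:\ \xX^{\xX}\to \xX^{\xX}$ preserves these coproducts for every endofunctor $C$ (precomposition with a fixed functor clearly commutes with pointwise colimits); and $g(-)\circ -:\ \xX^{\xX}\to \xX^{\xX}$ preserves coproducts for each $g\in G$, which holds because each $g(-):\ \xX\to \xX$ is an isomorphism of categories, hence preserves all colimits, so postcomposition with it preserves pointwise coproducts. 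With these checked, \prref{2.4} gives that $\coprod_{g\in G} g(-)$ carries an induced monoid structure in $(\xX^{\xX},\circ,1_\xX)$ — i.e. it is a monad on $\xX$, which we name $G(-)$ — and that the category of $(g(-))_{g\in G}$-actions on one-member families is canonically isomorphic to the category of $G(-)$-actions in $\xX^{\xX}$, which is by definition $\xX^{G(-)}$.

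Composing the two isomorphisms yields the asserted canonical isomorphism $Z^1(G,\xX)\cong \xX^{G(-)}$. It then remains to trace through the explicit description. Under \prref{2.3} a $1$-cocycle $(X,\xi)$ with $\xi=(\xi_g)_{g\in G}$ corresponds to the action family whose $g$-th component is $\xi_g:\ g(X)\to X$ (here $X$ is the constant, or rather the relevant one-member, family). Under the isomorphism of \prref{2.4}, an action of the split monoid by the family $(\xi_g)_{g\in G}$ is sent to the single morphism out of the coproduct induced by the universal property, i.e. to $\xi_*:\ \coprod_{g\in G} g(X)=G(X)\to X$ with $\xi_*\,\mathrm{in}_g=\xi_g$ for each $g$, where $\mathrm{in}_g$ is the $g$-th coproduct injection. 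That this $\xi_*$ satisfies the unit and associativity axioms of a $G(-)$-algebra is exactly the content of \prref{2.4}; concretely it unwinds to $\xi_1=1_X$ and $\xi_{hg}=\xi_h\,h(\xi_g)$, i.e. \deref{1.1}(a). Functoriality in both directions — that a morphism of $1$-cocycles becomes a morphism of $G(-)$-algebras and conversely — is inherited from the functoriality already asserted in Propositions~\ref{pr:2.3} and \ref{pr:2.4}.

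The main obstacle is not any single deep step but the bookkeeping in the second paragraph: one must be slightly careful that \prref{2.4} is stated for a monoidal category $\cC$ in which the relevant object of $\Fam(\cC)$ is a genuine monoid, whereas here we start from a \emph{split} monoid, and that the coproduct-preservation hypotheses are read off correctly for the endofunctor category rather than for $\xX$ itself. Once it is clear that ``$-\circ C$ preserves coproducts'' and ``$g(-)\circ-$ preserves coproducts'' follow respectively from pointwise computation of colimits in $\xX^{\xX}$ and from each $g(-)$ being an automorphism of $\xX$, the proof is a formal composition of the two cited propositions together with an unwinding of their explicit formulas.
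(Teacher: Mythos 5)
Your proposal is correct and follows essentially the same route as the paper, which obtains the theorem by combining Propositions~\ref{pr:2.3} and \ref{pr:2.4} after noting that in the situation of \siref{2.2} the required coproduct-preservation hypotheses hold because each $g(-)$ is an isomorphism (and only $G$-indexed, pointwise coproducts in $\xX^{\xX}$ are needed). Your extra verification that $-\circ C$ preserves pointwise coproducts and the unwinding of $\xi_*$ via the coproduct injections are exactly the details the paper leaves implicit.
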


{\bf Convention}: we will identify $\xi$ and $\xi_*$, and just write $\xi$.

\section{$G$-Galois monads and adjunctions}\selabel{3}
\setcounter{equation}{0}
\subsection*{Relating split and connected monoids}\hfill\break
Let $M=((M_i)_{i\in I},e,m)$ be a split monoid as in \siref{2.1}, and
$M'=(M',e',m')$ a monoid in the monoidal category $\cC$. Regarding $\cC$ as a 
sub-monoidal-category in $\Fam(\cC)$, we can regard $M'$ as a monoid in $\Fam(\cC)$, and we will call such a monoid in $\Fam(\cC)$ {\it connected}. We need the description of monoid homomorphisms 
from $M$ to $M'$, i.e. morphisms $u:\ M\to M'$ in $\Fam(\cC)$ making the diagrams
\begin{equation}\eqlabel{3.1}
\xymatrix{
M\ar[rr]^{u}&&M'\\
&1\ar[lu]^e\ar[ru]_{e'}&}
\hspace*{16mm}
\xymatrix{M\ot M \ar[rr]^{u\ot u}\ar[d]_{m}&& M'\ot M'\ar[d]^{m'}\\
M\ar[rr]^{u}&& M'}
\end{equation}
commute. Since $M'$ regarded as an object in $\Fam(\cC)$ is a one-member family, a morphism 
$u:\ M\to M'$ in $\Fam(\cC)$ is nothing but an $I$-indexed family of morphisms 
$u_i:\ M_i\to M'$ - and the commutativity of \equref{3.1} expressed in the language of $u_i$'s becomes
\begin{equation}\eqlabel{3.2}
\xymatrix{
M_1\ar[rr]^{u_1}&&M'\\
&1\ar[lu]^=\ar[ru]_{e'}&}
\hspace*{16mm}
\xymatrix{M_i\ot M_j \ar[rr]^{u_i\ot u_j}\ar[d]_{=}&& M'\ot M'\ar[d]^{m'}\\
M_{ij}\ar[rr]^{u_{ij}}&& M'}
\end{equation}
i.e.
\begin{equation}\eqlabel{3.3}
e'=u_1,~~~m'(u_i\ot u_j)=u_{ij}~~(i,j\in I).
\end{equation}

We will use an example of such a homomorphism provided by

\begin{lemma}\lelabel{3.1}
Let $M=((M_i)_{i\in I},e,m)$ and $M'=(M',e',m')$ be as above, and let
$M'\ot M_i=M'$ and $m'\ot M_i=m'$ for every $i\in I$. Then the family
$u=(u_i)_{i\in I}$, where $u_i=e'\ot M_i$ is the morphism
\begin{equation}\eqlabel{3.4}
\xymatrix{
M_i=M_{1i}=M_1\ot M_i\ar[rr]^(.54){e'\ot M_i}&&M'\ot M_i=M',}
\end{equation}
is a monoid homomorphism from $M$ to $M'$.
\end{lemma}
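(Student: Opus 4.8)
The plan is to verify directly that the family $u=(u_i)_{i\in I}$ with $u_i=e'\ot M_i$ satisfies the two equations in \equref{3.3}, namely $e'=u_1$ and $m'(u_i\ot u_j)=u_{ij}$. The hypotheses $M'\ot M_i=M'$ and $m'\ot M_i=m'$ (for all $i$) are exactly what make the morphisms $u_i=e'\ot M_i\colon M_i\to M'$ well-typed in the first place, since $e'\colon 1\to M'$ gives $e'\ot M_i\colon 1\ot M_i=M_i\to M'\ot M_i=M'$ (using strictness of $\cC$, so that $1\ot M_i=M_i$); I would begin by making this typing explicit.

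For the unit condition: since $M$ is split we have $M_1=1$ and the unit $e$ is an identity, so $u_1=e'\ot M_1=e'\ot 1=e'$ by the right unit law (again, strict), giving the left-hand triangle of \equref{3.2} with $u_1=e'$ as required. For the multiplicativity condition I would expand $m'(u_i\ot u_j)$: here $u_i\ot u_j=(e'\ot M_i)\ot(e'\ot M_j)$, and by functoriality of $\ot$ (interchange) this equals $(e'\ot e')\ot(M_i\ot M_j)=(e'\ot e')\ot M_{ij}$, using that $M$ is split so $M_i\ot M_j=M_{ij}$. Composing with $m'$ and using the hypothesis $m'\ot M_{ij}=m'$ (applied with $i$ replaced by the product $ij$, which is legitimate since the hypothesis is assumed for every index) together with the interchange law, one gets $m'(u_i\ot u_j)=\bigl(m'(e'\ot e')\bigr)\ot M_{ij}$. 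Now $m'(e'\ot e')$ is just $e'$ by the unit axiom for the monoid $M'$ (either unit law suffices after inserting the appropriate identity), so $m'(u_i\ot u_j)=e'\ot M_{ij}=u_{ij}$, which is the right-hand square of \equref{3.2}.

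The only genuinely delicate point is the careful bookkeeping with the interchange law of $\ot$ and with the simplifying hypotheses $M'\ot M_i=M'$, $m'\ot M_i=m'$ — in particular checking that these identifications are compatible with the associativity and unit constraints $\alpha',\lambda',\rho'$ of $\Fam(\cC)$ so that the diagrams \equref{3.2} really do commute on the nose rather than merely up to coherence isomorphism. Since $\cC$ is strict, the coherence data of $\Fam(\cC)$ are controlled entirely by the bijections on index sets (associativity of $I$ and of the one-element index set of $M'$), and one checks that applying $-\ot M_i$ to $M'$ collapses the relevant index bijections to identities; I expect this to be the main obstacle, but it is bookkeeping rather than a conceptual difficulty. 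With these compatibilities in hand, \equref{3.3} holds and hence, by the computation of \equref{3.1}--\equref{3.3}, $u$ is a monoid homomorphism $M\to M'$.
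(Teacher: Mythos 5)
Your unit computation and the last two steps of your multiplicativity computation (pulling $m'$ across $-\ot M_{ij}$ by functoriality, and $m'(e'\ot e')=e'$) are fine, but the middle step is not: the asserted identity $u_i\ot u_j=(e'\ot M_i)\ot(e'\ot M_j)=(e'\ot e')\ot(M_i\ot M_j)$ is not an instance of functoriality/interchange. Interchange only lets you factor a tensor of morphisms as a composite, e.g.\ $u_i\ot u_j=(M'\ot u_j)\circ(u_i\ot M_j)$; it never permutes tensor factors, and rewriting $e'\ot M_i\ot e'\ot M_j$ as $e'\ot e'\ot M_i\ot M_j$ amounts to sliding the identity of $M_i$ past $e'$, which requires a symmetry or braiding that $\cC$ is not assumed to have (and even then would hold only up to the braiding isomorphism, not on the nose). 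In the situation the lemma is built for, $\cC=\xX^\xX$ with $\ot$ given by composition, the identity genuinely fails: with $M_i=g_i(-)$, $M'=T$, $e'=\eta$, the component at $X$ of $u_i\ot u_j$ is $T(\eta_{g_jX})\circ\eta_{g_ig_jX}$, while that of $(e'\ot e')\ot M_{ij}$ is $T(\eta_{g_ig_jX})\circ\eta_{g_ig_jX}$, and these differ in general. (Your concern about the coherence data of $\Fam(\cC)$ is beside the point: the equations \equref{3.3} to be verified live entirely in the strict category $\cC$.)

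The repair is to keep the factors in their given order, which is what the paper's proof does: by interchange, $m'(u_i\ot u_j)=m'(M'\ot u_j)(u_i\ot M_j)$; substituting $m'=m'\ot M_j$ (the hypothesis), $u_j=e'\ot M_j$ and $u_i=e'\ot M_i$, the whole composite becomes $(m'\ot M_j)\bigl((M'\ot e')\ot M_j\bigr)\bigl((e'\ot M_i)\ot M_j\bigr)=\bigl(m'(M'\ot e')(e'\ot M_i)\bigr)\ot M_j$ by functoriality of $-\ot M_j$, and the right unit law $m'(M'\ot e')=1_{M'}$ then yields $(e'\ot M_i)\ot M_j=e'\ot M_{ij}=u_{ij}$. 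So your target equations are the right ones and your conclusion is true, but your route passes through a false intermediate equality; note that the hypotheses $M'\ot M_i=M'$ and $m'\ot M_i=m'$ are deliberately one-sided (right-invariance) precisely because no commutation of tensor factors is available.
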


\begin{proof}
We have that
$e'=e'\ot 1=e'\ot M_1=u_1$,
and
\begin{eqnarray*}
&&\hspace*{-2cm}
m'(u_i\ot u_j)=
m'(M'\ot u_j)(u_i\ot M_j)~~{\rm (since}~\ot~{\rm is~a~bifunctor)}\\
&=& (m'\ot M_j)(M'\ot e'\ot M_j)(e'\ot M_i\ot M_j),
\end{eqnarray*}
since $m'=m'\ot M_j$, $u_j=e'\ot M_j$ and $u_i=e'\ot M_i$.
\end{proof}

\leref{3.1} easily gives

\begin{corollary}\colabel{3.2}
Under the assumptions of \prref{2.4} and \leref{3.1}, the morphism
$\coprod_{i\in I} M_i\to M'$ induced by the family of morphisms 
\equref{3.4} is a monoid homomorphism in the monoidal category $\cC$.
\end{corollary}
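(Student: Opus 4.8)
The plan is to combine Lemma 3.1 with Proposition 2.4 by a direct verification that the monoid homomorphism from the split monoid $M$ to the connected monoid $M'$, already produced pointwise in Lemma 3.1, factors through the coproduct in a way that respects the induced monoid structures. First I would invoke Proposition 2.4: under its hypotheses the coproduct $\coprod_{i\in I} M_i$ carries a canonical monoid structure $(\coprod M_i, \hat e, \hat m)$, where $\hat e$ is $e$ composed with the coproduct injection $\iota_1:\ M_1 \to \coprod_i M_i$, and $\hat m$ is the unique morphism whose composite with each injection $\iota_i \ot \iota_j:\ M_i\ot M_j \to (\coprod_k M_k)\ot(\coprod_k M_k)$ (using that $-\ot C$ and $M_i\ot -$ preserve coproducts) equals $\iota_{ij}\circ m_{(i,j)}$. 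Then, given the family $(u_i)_{i\in I}$ of Lemma 3.1, the universal property of the coproduct yields a unique morphism $u:\ \coprod_{i\in I} M_i \to M'$ with $u\circ \iota_i = u_i$ for all $i$; this $u$ is the morphism in the statement, and it remains only to check the two diagrams of \equref{3.1} with $M$ replaced by $\coprod_i M_i$.

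The key steps, in order, are: (1) unit axiom: show $u\circ \hat e = e'$. Precompose nothing — just compute $u\circ\hat e = u\circ\iota_1\circ e = u_1\circ e = e'$, where the last equality is the first equation of \equref{3.3}, which Lemma 3.1 establishes. (2) Multiplicativity: show $u\circ \hat m = m'\circ(u\ot u)$. Both sides are morphisms out of $(\coprod_k M_k)\ot(\coprod_k M_k)$; since $-\ot C$ preserves coproducts and $M_i\ot -$ preserves coproducts, this tensor is itself a coproduct with injections $\iota_i\ot\iota_j$, so it suffices to check equality after precomposing with each $\iota_i\ot\iota_j$. On the left, $u\circ\hat m\circ(\iota_i\ot\iota_j) = u\circ\iota_{ij}\circ m_{(i,j)} = u_{ij}\circ m_{(i,j)}$; in the split case $m_{(i,j)}$ is an identity and $u_{ij} = u_{ij}$. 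On the right, $m'\circ(u\ot u)\circ(\iota_i\ot\iota_j) = m'\circ(u\iota_i \ot u\iota_j) = m'\circ(u_i\ot u_j)$. These agree by the second equation of \equref{3.3}, i.e. $m'(u_i\ot u_j) = u_{ij}$, which is exactly what Lemma 3.1 gives. (3) Conclude that $u$ is a monoid homomorphism in $\cC$, noting that both $\coprod_i M_i$ (by Proposition 2.4) and $M'$ (by hypothesis, being connected) are genuine monoids in $\cC$, not merely in $\Fam(\cC)$.

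The only genuinely delicate point — and the step I expect to be the main obstacle — is the bookkeeping in step (2): one must be careful that the codomain $(\coprod_k M_k)\ot(\coprod_k M_k)$ really is a coproduct indexed by pairs $(i,j)$, which requires invoking preservation of coproducts by $-\ot C$ on the outside and by $M_i\ot -$ (equivalently the $C\ot -$ appearing after the first step) on the inside, in the correct order, together with the coherence isomorphisms if $\cC$ is not strict. Since Proposition 2.4 already assembles the monoid structure on $\coprod_i M_i$ using precisely these preservation hypotheses, the cleanest route is to phrase step (2) entirely in terms of the defining property of $\hat m$ from Proposition 2.4, so that no new coproduct-commutation lemma is needed. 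Everything else is a routine diagram chase reusing \equref{3.3}; in fact, once one observes that $u$ is a morphism $M\to M'$ in $\Fam(\cC)$ that happens to be "detected" by the coproduct, the corollary is essentially the assertion that the equivalence of Proposition 2.4 is compatible with the forgetful passage from $\Fam(\cC)$-monoids to $\cC$-monoids, and the proof can be written in a few lines.
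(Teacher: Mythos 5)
Your proof is correct and follows exactly the route the paper intends: the paper offers no written argument beyond ``\leref{3.1} easily gives'' \coref{3.2}, and the implicit reasoning is precisely your transport of the $\Fam(\cC)$-homomorphism along the coproduct, checking the unit and multiplication squares after precomposition with the injections $\iota_i$ and $\iota_i\ot\iota_j$ (legitimate because $-\ot C$ and each $M_i\ot -$ preserve coproducts, as in \prref{2.4}) and invoking the equations \equref{3.3} supplied by \leref{3.1}. You have simply made explicit the diagram chase the authors leave to the reader.
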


Applying this to \siref{2.2}, we obtain

\begin{corollary}\colabel{3.3}
Let $(T,\eta,\mu)$ be a monad on $\xX$ with $T(g(-))=T$ and $\mu(g(-))=\mu$
for all $g\in G$. Then the natural transformation
$\gamma:\ G(-)=\coprod_{g\in G} g(-)\to T$ induced by the natural transformations
\begin{equation}\eqlabel{3.5}
g(-)=1_{\xX}(g(-))\mapright{\eta(g(-))}T(g(-))=T~~~(g\in G),
\end{equation}
is a morphism of monads.
\end{corollary}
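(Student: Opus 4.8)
The plan is to recognize \coref{3.3} as a direct instance of \coref{3.2}, obtained by specializing \siref{2.2} and taking the connected monoid $M'$ to be the monad $(T,\eta,\mu)$ itself, regarded as a monoid in the monoidal category $(\xX^\xX,\circ,1_{\xX})$. The first step is therefore to set up the dictionary: $\cC=(\xX^\xX,\circ,1_{\xX})$, with tensor $F\otimes H=F\circ H$; the split monoid $M=((M_i)_{i\in I},e,m)$ is $(g(-))_{g\in G}$ with $I=G$; and $(M',e',m')=(T,\eta,\mu)$. Under this dictionary the two hypotheses ``$M'\otimes M_i=M'$'' and ``$m'\otimes M_i=m'$'' of \leref{3.1} become exactly ``$T(g(-))=T$'' and ``$\mu(g(-))=\mu$'', which are precisely the hypotheses of \coref{3.3}; and the family \equref{3.4} of morphisms $u_i=e'\otimes M_i$ becomes the family $g(-)=1_{\xX}(g(-))\mapright{\eta(g(-))}T(g(-))=T$ appearing in \equref{3.5}, so that the morphism $\coprod_{g\in G}g(-)\to T$ induced by \equref{3.4} is literally the natural transformation $\gamma$ of the statement.

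Second, I would check that the hypotheses of \prref{2.4} are met, since \coref{3.2} is stated under those assumptions together with those of \leref{3.1}; this is the only point that calls for a word of justification, and it was in fact already noted in the paragraph preceding \thref{2.5}. Coproducts in the endofunctor category $\xX^\xX$ are computed pointwise, so $-\circ C$ preserves all coproducts for every endofunctor $C$, and each $M_i\circ-=g(-)\circ-$ preserves coproducts because $g(-)$ is an isomorphism of $\xX$ and composition with it is again pointwise; the needed $G$-indexed coproducts in $\xX$ are implicitly assumed, since $G(-)$ is part of the statement. Hence $\coprod_{g\in G}g(-)$ carries the induced monoid structure supplied by \prref{2.4}, which is by definition the monad structure of $G(-)$, and both \leref{3.1} and \coref{3.2} apply with $M'=T$.

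With the data matched and the hypotheses verified, \coref{3.2} now gives directly that $\gamma$ is a monoid homomorphism in $(\xX^\xX,\circ,1_{\xX})$. Since a morphism of monads on $\xX$ is, by definition, nothing but a monoid homomorphism in this monoidal category (i.e.\ a natural transformation compatible with the units and the multiplications), this is precisely the assertion of \coref{3.3}.

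The main obstacle is bookkeeping rather than mathematics: one must keep careful track of the side on which the whiskerings act. The tensor of $\xX^\xX$ is $F\otimes H=F\circ H$, as forced by \siref{2.1}, \siref{2.2} and \prref{2.3}, which identify an action $\xi_g:\ g(-)\circ X\to X$ with a morphism $g(X)\to X$; consequently $M'\otimes M_i$ is the right whiskering $T\circ g(-)$, $m'\otimes M_i$ is $\mu(g(-))$, and $e'\otimes M_i$ is the right whiskering $\eta(g(-))$ that appears in \equref{3.5}. Once these identifications are pinned down there is nothing further to verify, and in particular the question of reading $T(g(-))=T$ as $T\circ g(-)=T$ rather than, say, $g(-)\circ T=T$ is settled by matching with ``$M'\otimes M_i=M'$'' in \leref{3.1}.
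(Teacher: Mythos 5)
Your proposal is correct and is essentially the paper's own argument: the paper obtains \coref{3.3} precisely by applying \coref{3.2} (hence \leref{3.1} together with \prref{2.4}) to \siref{2.2} with $M'=(T,\eta,\mu)$ in $(\xX^\xX,\circ,1_{\xX})$. Your extra care about the side of the whiskering and the coproduct-preservation hypotheses just makes explicit what the paper leaves implicit in the phrase ``Applying this to \siref{2.2}''.
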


\subsection*{$G$-Galois monads and adjunctions}\hfill\break
For a monad $T=(T,\eta,\mu)$ on $\xX$, let
\begin{equation}\eqlabel{3.6}
\xymatrix{
\xX\ar[r]<3pt>^{F^T}&\xX^T\ar[l]<3pt>^{U^T}},~~~
\eta:\ 1_{\xX}\to U^TF^T,~\varepsilon:\ F^TU^T\to 1_{\xX^T}
\end{equation}
be the associated free-forgetful adjunction. 
Then the assumptions $T(g(-)) = T$ and $\mu(g(-)) = \mu$ of \coref{3.3} 
can be reformulated as $F^T(g(-))=F^T$, and
$\gamma:\ G(-)=\coprod_{g\in G} g(-)\to T$
being a morphism of monads induces a functor
\begin{equation}\eqlabel{3.7}
K^{\gamma}:\ \xX^T\to \xX^{G(-)}\cong Z^1(G,\xX).
\end{equation}
Explicitly, for a $T$-algebra $(X,\zeta)$, we have
\begin{equation}\eqlabel{3.8}
K^{\gamma}(X,\zeta)=(X,\xi)~~{\rm with}~~\xi_g=\zeta\eta_{g(X)}~~~(g\in G).
\end{equation}

\begin{definition}\delabel{3.4}
Let $G$ be a group and $\xX$ a $G$-category with $G$-indexed coproducts. A monad 
$T = (T,\eta,\mu)$ on $\xX$ is said to be a $G$-Galois monad if
\begin{enumerate}
\item[(a)] $T(g(-)) = T$ and $\mu(g(-)) = \mu$ for all $g\in G$;
\item[(b)] the morphism $\gamma:\ G(-)=\coprod_{g\in G} g(-)\to T$
is an isomorphism.
\end{enumerate}
Equivalently, $T$ is a $G$-Galois monad if the free functor $F^T$ satisfies 
$F^T(g(-)) = F^T$ for all 
$g\in G$ and the functor $K^{\gamma}$ is an isomorphism.
\end{definition}

If instead of a monad $T$ we begin with an adjunction
\begin{equation}\eqlabel{3.9}
\xymatrix{
\xX\ar[r]<3pt>^{F}&\yY\ar[l]<3pt>^{U}},~~~
\eta:\ 1_{\xX}\to UF,~\varepsilon:\ FU\to 1_{\yY}
\end{equation}
where $\xX$ is a $G$-category again, \deref{3.4} should be reformulated as

\begin{definition}\delabel{3.5}
An adjunction \equref{3.9} is said to be a $G$-Galois adjunction if
\begin{enumerate}
\item[(a)] $F(g(-))=F$, for all $g\in G$;
\item[(b)] the category $\xX$ admits $G$-indexed coproducts and the morphism
$\gamma_X:\ \coprod_{g\in G} g(X)\to UF(X)$ induced by the collection of morphisms
$\eta_{g(X)}:\ g(X)\to UF(g(X))=UF(X)$ $(g\in G)$ 
is an isomorphism for each object $X\in \xX$;
\item[(c)] the functor $U$ is monadic.
\end{enumerate}
\end{definition}

Recall that for a general adjunction \equref{3.9} with the corresponding monad 
$T=(T,\eta,\mu)=(UF,\eta,U\varepsilon F)$,
the comparison functor $K:\ \yY\to \xX^T$ 
is defined by $K(Y) = (U(Y),U(\varepsilon_Y))$, and when $U$ is monadic the 
quasi-inverse $L:\  \xX^T\to \yY$ of $K$ can be described via the coequalizer diagram
\begin{equation}\eqlabel{3.10}
\xymatrix{
FUF(X) \ar[rr]<3pt>^{\varepsilon_{F(X)}} \ar[rr]<-3pt>_{F(\zeta)}&&F(X)
\ar[r]&L(X,\zeta)}
\end{equation}
where $(X,\zeta)$ is any $T$-algebra. Moreover, if $\yY$ admits coequalizers, then $K$ always (i.e. even under no monadicity assumption) has a left adjoint $L$ defined in the same way.

Let us provide another construction for the functor $L$ in the situation of \deref{3.5}, using the fact that the monad involving there is isomorphic (by \ref{de:3.5}(b)) to the monad $G(-)$.
 At the same time this will describe the composite of $L$ with the isomorphism between the categories $Z^1(G,\xX)$ and $\xX^T$, i.e. show how to calculate the object in $\yY$ corresponding to a given 1-cocycle $G\to \xX$. For, we observe:\\
$\bullet$ Since $\gamma_X:\ \coprod_{g\in G} g(X)\to UF(X)$ in \deref{3.5}
is an isomorphism, we can replace the two parallel arrows in \equref{3.10} by their composites with 
$F(\gamma_X)$. Moreover, since the functor $F$ being a left adjoint sends coproducts to coproducts, we can make a further replacement by composing with the canonical isomorphism
$\coprod_{g\in G} F(g(X))\to F(\coprod_{g\in G} g(X))$. After that, instead of defining
$L(X,\zeta)$ via the coequalizer diagram \equref{3.10}, 
we can define it via the morphism $F(X) \to L(X,\zeta)$ that composed with
\begin{equation}\eqlabel{3.11}
\xymatrix{
F(g(X))\ar[r]^{F(\iota_g)}& F(G(X))\ar[r]^{F(\gamma_X)}& FUF(X)\ar[r]^{\varepsilon_{F(X)}}&
F(X)}
\end{equation}
and with
\begin{equation}\eqlabel{3.12}
\xymatrix{
F(g(X))\ar[r]^{F(\iota_g)}& F(G(X))\ar[r]^{F(\gamma_X)}& FUF(X)\ar[r]^{F(\zeta)}&
F(X)}
\end{equation}
makes them equal for every $g\in G$, and is universal with this property (where 
$\iota_g$ is the coproduct injection corresponding to $g$).\\
$\bullet$ For the composite \equref{3.11}, we have
$$
\varepsilon_{F(X)} F(\gamma_X)F(\iota_g)=\varepsilon_{F(X)} F(\gamma_X\iota_g)
\equal{\ref{de:3.5}(b)}
\varepsilon_{F(X)}  F(\eta_{g(X)})
\equal{\ref{de:3.5}(a)}\varepsilon_{F(g(X))}F(\eta_{g(X)}),$$
which is the identity morphism of $F(g(X)) = F(X)$ by one of the triangular identities 
for adjoint functors.\\
$\bullet$ And for \equref{3.12}:
$$F(\zeta)F(\gamma_X)F(\iota_g)=
F(\zeta\gamma_X\iota_g)=f(\zeta\eta_{g(X)})=f(\xi_g),$$
where $\xi$ is as in \equref{3.8}.\\
$\bullet$ As easily follows from \ref{de:1.1}(a) and \ref{de:3.5}(a), the assignment 
$g\mapsto F(\xi_g)$ determines a group homomorphism 
$G \to \Aut(F(X))$, i.e. makes $F(X)$ a $G$-object in $\yY$. Moreover, if we regard $\yY$ as a 
$G$-category with the trivial action of $G$ then the category $Z^1(G,\yY)$ of 1-cocycles 
$G\to \yY$ is of course nothing but the category $\yY^G$ of $G$-objects in $\yY$, and then 
$F$, obviously being a $G$-functor, induces a functor
\begin{equation}\eqlabel{3.13}
Z^1(G,F):\ Z^1(G,\xX)\to Z^1(G,\yY)=\yY^G.
\end{equation}
And of course the $G$-object in $\yY$ obtained above is nothing but
$Z^1(G,F)(X,\xi)$.\\
$\bullet$ If $\yY$ admits ``allÓ colimits (say, as large as its hom-sets), then there is the colimit functor
\begin{equation}\eqlabel{3.14}
{\rm colim}:\ \yY^G\to \yY
\end{equation}
which sends a $G$-object $Y$ to its {\it universal quotient with the trivial} $G$-{\it action}. We will briefly write ${\rm colim}(Y) = Y/G$, also when that colimit exists just for a given object $Y$. In particular 
$L(X,\zeta)=F(X)/G$.

Summarizing, and using the isomorphism \equref{3.7} defined by \equref{3.8} and the definition of the comparison functor $K : \yY\to \xX^T$, we obtain

\begin{theorem}\thlabel{3.8}
(a) For an adjunction \equref{3.9} satisfying \ref{de:3.5}(a) and (b) the left adjoint 
$L:\ \xX^T\to \yY$ of the comparison functor $K:\ \yY\to \xX^T$ can be described as
\begin{equation}\eqlabel{3.15}
L(X,\zeta)=F(X)/G,
\end{equation}
where $F(X)$ is regarded as a $G$-object in $\yY$ via
$$g\mapsto F(\xi_g)=F(\zeta\eta_{g(X)}):\ F(X)\to F(X).$$
(b) If \equref{3.9} is $G$-Galois adjunction then there is a category equivalence
\begin{equation}\eqlabel{3.16}
\yY\sim Z^1(G,\xX),
\end{equation}
under which an object $Y$ in $\yY$ corresponds to the 1-cocycle
\begin{equation}\eqlabel{3.17}
\bigl(U(Y), (U(\varepsilon_Y)\eta_{g(U(Y))})_{g\in G}\bigr):\ G\to \xX;
\end{equation}
conversely, a 1-cocycle $(X,\xi):\ G\to \xX$ corresponds to the object $F(X)/G$,
where $F(X)$ is regarded as a $G$-object in $\yY$ via $g\mapsto F(\xi_g):\ 
F(X)\to F(X)$.
\end{theorem}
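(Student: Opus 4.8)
The plan is to treat part (a) and part (b) essentially as bookkeeping on top of the machinery already assembled, with the one genuine verification being that the explicit coequalizer description of $L$ collapses, under the isomorphism $\xX^T\cong\xX^{G(-)}\cong Z^1(G,\xX)$, to the quotient $F(X)/G$.

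\textbf{Part (a).} First I would recall that since $U$ is (a right adjoint, but here not assumed) and $\yY$ is assumed to have coequalizers — or at least the relevant ones exist — the comparison functor $K:\yY\to\xX^T$ has a left adjoint $L$ given by the coequalizer \equref{3.10}. Under hypotheses \ref{de:3.5}(a) and (b) the monad $T=UF$ is isomorphic, via $\gamma$, to $G(-)$, so the two parallel arrows in \equref{3.10} may be precomposed with $F(\gamma_X)$ and with the canonical isomorphism $\coprod_{g}F(g(X))\to F(\coprod_g g(X))$, exactly as laid out in the bulleted discussion preceding the theorem. The point is then that testing the coequalizer against the $G$-indexed family of coproduct injections $\iota_g$ is the same as testing it against the single map $F(\gamma_X)$ composed with the canonical iso (because $F$ preserves the coproduct and the $\iota_g$ are jointly epic there). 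The first bullet computes $\varepsilon_{F(X)}F(\gamma_X)F(\iota_g)=1_{F(g(X))}=1_{F(X)}$ via a triangular identity together with \ref{de:3.5}(a), and the second computes $F(\zeta)F(\gamma_X)F(\iota_g)=F(\xi_g)$ with $\xi_g=\zeta\eta_{g(X)}$ as in \equref{3.8}. Hence $L(X,\zeta)$ is the universal object receiving a map $F(X)\to L(X,\zeta)$ that coequalizes, for every $g$, the identity of $F(X)$ with $F(\xi_g)$ — and by \ref{de:1.1}(a) together with \ref{de:3.5}(a) the assignment $g\mapsto F(\xi_g)$ is a group homomorphism $G\to\Aut(F(X))$, so this universal object is precisely the colimit of $F(X)$ regarded as a $G$-object, i.e. $F(X)/G$. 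That gives \equref{3.15}, with the stated $G$-action.

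\textbf{Part (b).} If \equref{3.9} is moreover a $G$-Galois adjunction then by \ref{de:3.5}(c) $U$ is monadic, so $K:\yY\to\xX^T$ is an equivalence; composing with the isomorphism $\xX^T\cong\xX^{G(-)}\cong Z^1(G,\xX)$ of \thref{2.5} yields the equivalence \equref{3.16}. To read off the two directions of the correspondence: in the forward direction $K(Y)=(U(Y),U(\varepsilon_Y))$ is the $T$-algebra, and \equref{3.8} turns this into the $1$-cocycle whose $g$-component is $U(\varepsilon_Y)\eta_{g(U(Y))}$, which is exactly \equref{3.17}. In the backward direction $L$ is (a quasi-inverse to $K$, hence) the functor sending a $1$-cocycle $(X,\xi)$ — equivalently the $G(-)$-algebra $(X,\xi)$, equivalently the $T$-algebra $(X,\zeta)$ with $\zeta=\xi\gamma_X^{-1}$ — to $F(X)/G$ by part (a), with $F(X)$ carrying the $G$-action $g\mapsto F(\xi_g)$; note $\zeta\eta_{g(X)}=\xi_g$ so the formula matches.

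\textbf{Main obstacle.} The only step that is not pure transport of structure is the collapse in part (a): one must be careful that replacing the pair of parallel maps $\varepsilon_{F(X)},F(\zeta):FUF(X)\rightrightarrows F(X)$ by the $G$-family \equref{3.11}, \equref{3.12} does not change the coequalizer, which requires that $F(\gamma_X)$ be an isomorphism (from \ref{de:3.5}(b), using that $F$ is a left adjoint, hence preserves the coproduct) and that the coproduct injections $\iota_g$ be jointly epic — both of which hold, but should be stated rather than assumed silently. Everything else is a matter of unwinding the definition of $K$, the formula \equref{3.8}, the isomorphism of \thref{2.5}, and the elementary fact that the colimit of a $G$-object is its largest trivial-action quotient.
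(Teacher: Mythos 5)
Your proposal is correct and follows essentially the same route as the paper: for (a) you reproduce the replacement of the coequalizer \equref{3.10} by the $G$-indexed pairs \equref{3.11}--\equref{3.12} via the isomorphism $\gamma_X$ and preservation of coproducts by $F$, the triangular-identity and $F(\zeta\eta_{g(X)})=F(\xi_g)$ computations, and the identification of the resulting joint coequalizer with the colimit $F(X)/G$ of the $G$-object $g\mapsto F(\xi_g)$; for (b) you combine monadicity of $U$ with the isomorphism $K^{\gamma}$ of \equref{3.7}--\equref{3.8} and \thref{2.5}, exactly as the paper does. Your explicit remarks that $F(\gamma_X)$ is invertible, that the injections $\iota_g$ are jointly epimorphic, and that the needed colimits in $\yY$ must exist for part (a) merely make precise what the paper leaves implicit.
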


Furthermore, applying \prref{1.4}, we obtain:

\begin{theorem}\thlabel{3.9}
If \equref{3.9} is a $G$-Galois adjunction, 
then every object $Y_0$ in $\yY$ admits a category equivalence
\begin{equation}\eqlabel{3.18}
\yY[Y_0]\sim Z^1(G,\End(U(Y_0))_{\xi_0}),
\end{equation}
in which $\yY[Y_0]$ is the full subcategory in $\yY$ with objects all $Y$ in $\yY$ with $U(Y)$ isomorphic to $U(Y_0)$, and 
$(U(Y_0),\xi_0)=(U(Y_0), (U(\varepsilon_{Y_0})\eta_{g(U(Y_0))})_{g\in G})$ is the
1-cocycle $G\to \xX$ corresponding to $Y_0$ under the equivalence \equref{3.16}.
In particular the set of isomorphism classes of objects in  $\yY[Y_0]$ is bijective to
$H^1(G,\End(U(Y_0))_{\xi_0})= H^1(G,\Aut(U(Y_0))_{\xi_0})$.
\end{theorem}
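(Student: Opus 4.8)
The plan is to chain three functors: the equivalence $\yY\sim Z^1(G,\xX)$ of \thref{3.8}(b), an intermediate equivalence between two full subcategories of $Z^1(G,\xX)$, and the isomorphism of \prref{1.4}. Write $X_0=U(Y_0)$, and recall from \thref{3.8}(b) that $Y_0$ corresponds under \equref{3.16} to the $1$-cocycle $(X_0,\xi_0)$ with $\xi_0=(U(\varepsilon_{Y_0})\eta_{g(X_0)})_{g\in G}$; in particular the underlying object of this $1$-cocycle is literally $X_0$, so $Z^1(G,\xX)_{X_0}$ is nonempty and \prref{1.4} may be applied to it with the fixed cocycle $\xi_0$.

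\emph{Step 1: restrict \thref{3.8}(b).} Let $\Zz$ be the full subcategory of $Z^1(G,\xX)$ on the $1$-cocycles $(X,\xi)$ with $X\cong X_0$ in $\xX$. Since the equivalence of \thref{3.8}(b) sends $Y$ to the $1$-cocycle with underlying object $U(Y)$, it carries $\yY[Y_0]$ into $\Zz$ and its quasi-inverse carries $\Zz$ into $\yY[Y_0]$ (the quasi-inverse of $(X,\xi)\in\Zz$ has underlying object isomorphic to $X$, hence to $X_0$). As $\yY[Y_0]$ and $\Zz$ are full subcategories and the ambient functor is fully faithful, this restriction $\yY[Y_0]\to\Zz$ is fully faithful, and it is essentially surjective by the previous sentence; hence $\yY[Y_0]\sim\Zz$.

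\emph{Step 2: $\Zz\sim Z^1(G,\xX)_{X_0}$ via the inclusion.} Full faithfulness is automatic (a full subcategory of a full subcategory). For essential surjectivity, given $(X,\xi)\in\Zz$ and an isomorphism $f:X\to X_0$ in $\xX$, set $\xi'_g=f\,\xi_g\,g(f)^{-1}:g(X_0)\to X_0$; a direct check using $\xi_1=1_X$, $\xi_{hg}=\xi_h h(\xi_g)$ and $(hg)(-)=h(-)g(-)$ shows $(X_0,\xi')$ is a $1$-cocycle and that $f:(X,\xi)\to(X_0,\xi')$ is an isomorphism in $Z^1(G,\xX)$ (the required equation $f\xi_g=\xi'_g g(f)$ holds by construction), with $(X_0,\xi')\in Z^1(G,\xX)_{X_0}$. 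So the inclusion is an equivalence.

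\emph{Step 3: apply \prref{1.4} and conclude.} \prref{1.4}, with the fixed $1$-cocycle $(X_0,\xi_0)$ and the $G$-action $\rho(\xi_0)$ on $\End(X_0)$ of Observation~\ref{obs1.3}, gives a category isomorphism $Z^1(G,\End(X_0)_{\xi_0})\cong Z^1(G,\xX)_{X_0}$. Composing Steps 1--3 yields the equivalence \equref{3.18}. For the final assertion, a category equivalence induces a bijection on isomorphism classes of objects, so by \deref{1.1}(c) the set of isomorphism classes of $\yY[Y_0]$ is bijective with $H^1(G,\End(U(Y_0))_{\xi_0})$, which equals $H^1(G,\Aut(U(Y_0))_{\xi_0})$ by Observation~\ref{obs1.3}. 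I expect no real obstacle here, since the substance is already contained in \thref{3.8} and \prref{1.4}; the only point requiring care is the bookkeeping in Step 2, reconciling the condition ``$U(Y)\cong U(Y_0)$'' defining $\yY[Y_0]$ with the condition ``underlying object equal to $X_0$'' defining $Z^1(G,\xX)_{X_0}$, which is handled by the transport-of-structure argument above.
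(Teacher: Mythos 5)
Your proposal is correct and follows essentially the same route as the paper, which obtains \thref{3.9} by combining \thref{3.8}(b) with \prref{1.4} (the paper leaves the details implicit, with \reref{3.10} sketching the same composite of restricted functors). Your Step 2 merely makes explicit the transport-of-structure argument reconciling ``$U(Y)\cong U(Y_0)$'' with ``underlying object equal to $U(Y_0)$'', which the paper treats as obvious.
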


\begin{remark}\relabel{3.10}
The equivalence \equref{3.18} is ``less canonical" than the equivalence \equref{3.16} in the following sense: let $\yY[Y_0]$ be the full subcategory in $\yY$ with objects all $Y$ in $\yY$ with 
$U(Y) = U(Y_0)$, and let
\begin{equation}\eqlabel{3.19}
\yY(Y_0)\to Z^1(G,\xX)_{U(Y_0)}~~{\rm and}~~
Z^1(G,\xX)_{U(Y_0)} \to \yY[Y_0]
\end{equation}
be the functors induced by the equivalence \equref{3.16}. Since both of them obviously are category equivalences, so are the composites
\begin{equation}\eqlabel{3.20}
\begin{array}{c}
\yY(Y_0)\to Z^1(G,\xX)_{U(Y_0)}\to Z^1(G, \End(U(Y_0))_{\xi_0})~~{\rm and}\\
Z^1(G, \End(U(Y_0))_{\xi_0})\to Z^1(G,\xX)_{U(Y_0)} \to \yY[Y_0],
\end{array}
\end{equation}
where $\xi_0$ is as in \thref{3.9}, and the isomorphism
$Z^1(G,\xX)_{U(Y_0)}\cong Z^1(G, \End(U(Y_0))_{\xi_0})$
is obtained from \prref{1.4}. However, 
since the category $\yY(Y_0)$ is only equivalent, but not equal to $\yY[Y_0]$, the functors 
\equref{3.20} are not exactly quasi-inverse to each other. One can say that the equivalence 
\equref{3.18} is only canonical up to a choice of the quasi-inverse of the inclusion functor 
$\yY(Y_0)\to  \yY[Y_0]$. Yet, since the second functor in \equref{3.20} is uniquely determined only up to the choice of colimits in $\yY$, it is useful to describe it explicitly. And, since it is a composite of explicitly given functors, the description is straightforward:
\begin{equation}\eqlabel{3.21}
(\varphi\in Z^1(G,\End(U(Y_0))_{\xi_0}))\mapsto F(Y_0)/G,
\end{equation}
where $G$ acts on $F(Y_0)$ by
\begin{equation}\eqlabel{3.22}
g\mapsto F(\varphi(g)U(\varepsilon_{Y_0}\eta_{g(U(Y_0))}),
\end{equation}
since $g\mapsto F(\xi_g)$ in \equref{3.15}, the role of $\xi_g$ is played by $\varphi(g)\xi_{0g}$
here according to \prref{1.4}, and $\xi_{0g}=U(\varepsilon_{Y_0}\eta_{g(U(Y_0))})$
as in \thref{3.9}.
\end{remark}

\section{Galois descent for modules and algebras}\selabel{4}
\setcounter{equation}{0}
\subsection*{Galois adjunctions correspond to finite Galois extensions}\hfill\break
Let us take the adjunction \equref{3.9} to be
\begin{equation}\eqlabel{4.1}
\xymatrix{
(E\hbox{-}\Mod)^{\rm op}
\ar[rr]<3pt>^{F=(-)_p}&&(B\hbox{-}\Mod)^{\rm op}\ar[ll]<3pt>^{U=E\ot_B(-)}},
\end{equation}
$$
\eta:\ 1_{(E\hbox{-}\Mod)^{\rm op}}\to UF,~\varepsilon:\ FU\to 
1_{(B\hbox{-}\Mod)^{\rm op}},$$
where\\
$\bullet$ $p:\ B\to E$ is a homomorphism of non-trivial commutative rings with 1;\\
$\bullet$ $F:\ (E\hbox{-}\Mod)^{\rm op}\to (B\hbox{-}\Mod)^{\rm op}$ 
is the (dual of) the restriction-of-scalars functor from the dual (=opposite) category of $E$-modules to the dual category of $B$-modules, for which we shall write
$F(D)=D_p$;\\
$\bullet$ $U:\ (B\hbox{-}\Mod)^{\rm op}\to (E\hbox{-}\Mod)^{\rm op}$ 
is the (dual of) the induction functor, for which we shall write
$U(A)=E\ot_B A$;\\
$\bullet$ $\eta=(\eta_D:\ E\ot_B D_p\to D_p)_{D\in (E\hbox{-}\Mod)^{\rm op}}$ is defined by
$\eta_D(e\ot d)=ed$ (here and below, whenever it makes sense, we keep the direction of arrows as in the categories of modules, not as in their duals);\\
$\bullet$ $\varepsilon=\varepsilon_A:\ A\to (E\ot_B A)_p)_{A\in (B\hbox{-}\Mod)^{\rm op}}$ is defined by
$\varepsilon_A(a)=1\ot a$.

We will also assume that $(E,p)$ is equipped with a right action of a group $G$, i.e. 
$G$ acts on $E$ on the right (via ring automorphisms of $E$) with $p(b)g = p(b)$ for each 
$b\in B$ and $g\in G$. This makes $(E\hbox{-}\Mod)^{\rm op}$ a $G$-category, 
in which $g(D) = D_g$, i.e. $g(D) = D$ as abelian groups and, for $e\in E$ and 
$d\in D$, we have:
\begin{equation}\eqlabel{4.2}
ed\in g(D)~~{\rm is~the~same~as}~~(eg)d\in D.
\end{equation}

\begin{theorem}\thlabel{4.1}
The adjunction \equref{4.1} equipped with the $G$-category structure on $(E\hbox{-}\Mod)^{\rm op}$
 defined by \equref{4.2} is a $G$-Galois adjunction if and only if the following conditions hold:
\begin{enumerate}
\item[(a)] $p:\ B\to E$ is an effective (co)descent morphism, i.e., considered as a homomorphism of B-modules, it is a pure monomorphism;
\item[(b)]  the map $h:\ E\ot_B E\to GE$, where $GE$ denotes the $E$-algebra of all maps from
$G$ to $E$, defined by $h(e\ot e')(g)=(eg)e'$, is bijective;
\item[(c)] the group $G$ is finite.
\end{enumerate}
\end{theorem}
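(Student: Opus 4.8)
The plan is to unravel \deref{3.5} in the concrete situation of \equref{4.1} and match the three abstract conditions (a), (b), (c) there against the three ring-theoretic ones claimed. Since the category $\xX$ here is $(E\hbox{-}\Mod)^{\mathrm{op}}$, which certainly has all (co)limits, the $G$-indexed ``coproducts'' needed in \deref{3.5}(b) are products $\prod_{g\in G} g(D)$ in $E\hbox{-}\Mod$; I would first record that $g(D)=D_g$ by \equref{4.2}, so $\coprod_{g\in G} g(D)$ in $\xX$ is $\prod_{g\in G} D$ with the twisted $E$-action, and note that $UF(D) = E\otimes_B D_p$ with its $E$-action on the left factor.

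\textbf{Step 1: condition \ref{de:3.5}(a).} I would check $F(g(-))=F$ directly: $F(D_g)$ is $D$ with $B$-module structure obtained by restricting the twisted $E$-action along $p$, and since $p(b)g=p(b)$ for $b\in B$, $g\in G$ by hypothesis on the $G$-action, this $B$-action is unchanged. So \ref{de:3.5}(a) holds unconditionally, for \emph{any} $G$-action of the assumed type; this is a short computation.

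\textbf{Step 2: condition \ref{de:3.5}(b) $\Longleftrightarrow$ (b) $+$ (c).} The morphism $\gamma_D\colon \coprod_{g\in G} g(D)\to UF(D)$ of \deref{3.5}, being a morphism in $\xX=(E\hbox{-}\Mod)^{\mathrm{op}}$, is an $E$-linear map $E\otimes_B D_p \to \prod_{g\in G} D$ in the \emph{forward} direction; reading off its components from $\eta_{g(D)}\colon g(D)\to UF(g(D))=UF(D)$ (i.e. $\eta_D$ with the twisted action), I expect it to be $e\otimes d\mapsto \bigl((eg)d\bigr)_{g\in G}$. I would take $D=E$ to see this is exactly the map $h$ of part (b), so \ref{de:3.5}(b) for $D=E$ is precisely bijectivity of $h$. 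For the general $D$: using that $E$ is a $G$-Galois extension-type object, I would factor $\gamma_D$ through $h\otimes_E D$ (or through $h\otimes_B\mathrm{id}$ after base change) and argue that $\gamma_D$ is iso for all $D$ as soon as it is iso for $D=E$, provided also that the product $\prod_{g\in G}$ agrees with the coproduct $\coprod_{g\in G}$ that the formal statement of \deref{3.5}(b) really demands. That last agreement --- products equal coproducts over the index set $G$ in $E\hbox{-}\Mod$ --- forces $G$ to be \emph{finite}, which is where condition (c) enters; conversely when $G$ is finite $\prod=\coprod$ and everything is well-defined. So \ref{de:3.5}(b) $\iff$ ((b) and (c)), with the finiteness of $G$ being the subtle half of this equivalence (and the excerpt's own summary in \seref{4} flags that Galois $\Rightarrow$ $G$ finite, so this is the intended reading).

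\textbf{Step 3: condition \ref{de:3.5}(c), monadicity of $U$.} Here $U = E\otimes_B(-)$ viewed as a functor $(B\hbox{-}\Mod)^{\mathrm{op}}\to(E\hbox{-}\Mod)^{\mathrm{op}}$, i.e. dually the restriction-of-scalars functor along $p\colon B\to E$ is the \emph{right} adjoint whose monadicity we want --- equivalently, by the dual of Beck's theorem (the crude monadicity / (co)descent criterion), $p$ should be an effective codescent morphism of $B$-modules. I would invoke the standard fact (Joyal--Tierney, Mesablishvili, or the classical pure-submodule criterion) that restriction of scalars along a ring map $p\colon B\to E$ is comonadic iff $p$ is a pure monomorphism of $B$-modules, which dualizes to: $U$ in \equref{4.1} is monadic iff $p$ is a pure monomorphism, i.e. exactly condition (a). I would remark that (a) can also be extracted \emph{a posteriori} once (b) and (c) hold, since $h$ bijective exhibits $E$ as a finitely generated projective (faithfully flat, even) $B$-module, making $p$ split and hence pure; but I would present (a) as an independent clause of the ``iff'' because it is the translation of \ref{de:3.5}(c).

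\textbf{Assembling.} Putting the three equivalences together gives: \equref{4.1} is a $G$-Galois adjunction $\iff$ \ref{de:3.5}(a) \& (b) \& (c) $\iff$ (a) \& (b) \& (c) of the theorem, after noting \ref{de:3.5}(a) is automatic. The last thing to pin down is that conditions (a)--(c) of the theorem are together equivalent to ``$p$ is a $G$-Galois extension in the sense of \cite{CHR}'': Galois in that sense is defined by surjectivity/bijectivity of the map $h$ (condition (b)) together with $E$ being a finitely generated projective, faithful $B$-module, and one checks these imply finiteness of $G$ and purity of $p$, and conversely; I would cite \cite{CHR} for that classical package rather than reprove it. The main obstacle I anticipate is \textbf{Step 2}: correctly identifying the formal coproduct $\coprod_{g\in G}g(X)$ of \deref{3.5}(b) with a \emph{finite} product in $E\hbox{-}\Mod$ and thereby extracting the finiteness of $G$ cleanly --- one must be careful that an infinite coproduct in $(E\hbox{-}\Mod)^{\mathrm{op}}$ is an infinite product in $E\hbox{-}\Mod$, which cannot be iso to the tensor product $E\otimes_B E$ (a ``finite-type'' object) when $p$ is faithfully flat, so $|G|<\infty$ is genuinely forced; sorting out this duality-of-variance point, and the parallel point that $U$'s monadicity is really \emph{comonadicity} of restriction of scalars, is where the real care is needed, the rest being routine identification of maps.
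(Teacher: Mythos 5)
Your overall route coincides with the paper's: \ref{de:3.5}(a) is automatic because $p(b)g=p(b)$; \ref{de:3.5}(c) translates into condition (a) via the purity criterion for effective (co)descent (the paper simply cites \cite{JT}; note, though, that the standard fact is the comonadicity of the extension-of-scalars functor $E\ot_B(-)$, not of restriction of scalars, which is always monadic -- your conclusion is right, your attribution of the criterion is not); and \ref{de:3.5}(b) becomes the bijectivity of $\gamma_D\colon E\ot_B D_p\to GD$, $\gamma_D(e\ot d)(g)=(eg)d$, for \emph{every} $E$-module $D$, with $h=\gamma_E$. The genuine gap is in how you extract finiteness of $G$. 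Definition \ref{de:3.5}(b) does not demand that $G$-indexed products agree with coproducts in $E\hbox{-}\Mod$: the coproduct $\coprod_{g}g(D)$ it refers to is computed in $\xX=(E\hbox{-}\Mod)^{\rm op}$ and therefore simply \emph{is} the product $\prod_{g}D_g$, which exists for arbitrary $G$; so ``$\prod=\coprod$'' is neither required nor the source of (c). Your fallback argument also fails at this stage: nothing yet makes $E\ot_B E$ a ``finite-type'' $E$-module (finite generation and projectivity of $E$ over $B$ only come later, in \thref{4.7}, from the CHR results, and purity is weaker than faithful flatness), so you cannot exclude an isomorphism with an infinite product on those grounds. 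The correct mechanism -- and the paper's -- is that \ref{de:3.5}(b) is a condition on \emph{all} $D$, not just $D=E$: the factorization you gesture at, $\gamma_D = \bigl(\mathrm{can}\colon (GE)\ot_E D_p\to GD\bigr)\circ (h\ot_E 1_D)$ as in diagram \equref{4.4}, shows that, given (b), bijectivity of every $\gamma_D$ is equivalent to the canonical map $(GE)\ot_E D_p\to GD$ being bijective for every $D$, i.e.\ to $-\ot_E D$ commuting with the $G$-indexed product, and that holds if and only if $G$ is finite.

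A secondary error: your remark that (a) ``can also be extracted a posteriori once (b) and (c) hold'' is false, and the paper's \thref{4.7} is precisely about this point -- under (b) and (c), condition (a) is \emph{equivalent} to $E$ being finitely generated projective over $B$ (equivalently $B\cong E^G$), not implied. For instance $B=\ZZ$, $E=\QQ(i)$ with $G$ generated by complex conjugation satisfies (b) and (c), since $E\ot_\ZZ E\cong E\ot_\QQ E\cong E\times E$, yet $p$ is not pure ($\ZZ/2\ZZ\ot_\ZZ\QQ(i)=0$). So (a) really is an independent clause, as you ultimately present it; the identification of (a)--(c) with Galois extensions in the sense of \cite{CHR} is the content of \coref{4.8}, obtained through \thref{4.7}, and is not needed for \thref{4.1} itself.
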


\begin{proof}
(a) simply means that the functor $U:\ (B\hbox{-}\Mod)^{\rm op}\to
(E\hbox{-}\Mod)^{\rm op}$ is monadic (see \cite{JT} for details). On the other 
hand condition \ref{de:3.5} (b) now becomes: the map
\begin{equation}\eqlabel{4.3}
\gamma_D:\ E\ot_B D_p\to GD,~~\gamma_D(e\ot d)(g)=(eg)d,
\end{equation}
is bijective for each $D$ in $E\hbox{-}\Mod$.\\
Since this condition implies \ref{th:4.1}(b), it suffices to prove that, under conditions 
\ref{th:4.1} (a) and \ref{th:4.1}(b), condition \ref{de:3.5}(b) is equivalent to condition \ref{th:4.1}(c). For, note that we can identify $h$ with $\gamma_E$, and consider the commutative diagram
\begin{equation}\eqlabel{4.4}
\xymatrix{
(E\ot_B E)\ot_E D_p\ar[rr]^{h\ot 1_D}\ar[d]&&(GE)\ot_E D_p\ar[d]\\
E\ot_B D_p\ar[rr]^{\gamma_D}&&GD,}
\end{equation}
whose vertical arrows are obvious canonical maps. Since its top arrow is bijective by 
\ref{th:4.1} (b), and its left-hand vertical arrow is always bijective, the bijectivity of its bottom arrow is equivalent to the bijectivity of its right-hand vertical arrow. Therefore all we need to observe is that the canonical map $(GE)\ot_E D_p \to GD$ is bijective for every 
$E$-module $D$ if and only if $G$ is finite.
\end{proof}

\begin{remark}\relabel{4.2}
One can replace ordinary modules with modules equipped with various kinds of additional structure, and, in particular, with various kinds of algebras. To be more precise, \thref{4.1} still holds if instead of 
$E\hbox{-}\Mod$ and $B\hbox{-}\Mod$ we had $E\hbox{-}\Alg$ and 
$B\hbox{-}\Alg$ respectively, where ``$\hbox{-}\Alg$" refers to e.g.:
(a) arbitrary (not necessarily associative or commutative) algebras, with or without 1;

(b)  associative algebras, with or without 1;

(c)  (associative and) commutative algebras, with or without 1;

(d)  Lie algebras;

(e)  Jordan algebras;

(f)  differential algebras.\\
All we need here is to know that the functors $F$ and $U$ of \equref{4.1} induce similar functors for these kinds of algebras, and that replacing modules with algebras does not affect monadicity. For, see Section 6 in \cite{JT}.
\end{remark}

\subsection*{Translation of Theorems \ref{th:3.8} and \ref{th:3.9} for modules and algebras}\hfill\break
\thref{4.1} allows to apply Theorems \ref{th:3.8} and \ref{th:3.9} to modules and algebras. This gives:

\begin{theorem}\thlabel{4.3}
Suppose the equivalent conditions of \thref{4.1} hold. Then the category of $B$-modules is equivalent to the category $Z^1(G,(E\hbox{-}\Mod)^{\rm op})^{\rm op}$, which can be identified with the category of 
$E$-modules $D$ on which $G$ acts on the right via $B$-module automorphisms with 
$(ed)g = (eg)(dg)$ for all $e\in E$, $d\in D$, and $g\in G$. Under this equivalence:\\
(a) a $B$-module $A$ corresponds to the $E$-module $E\ot_B A$, on which $G$ acts on the right via 
$B$-module automorphisms with $(e\ot a)g = eg\ot a$ for all $e\in E$, $a\in A$, and $g\in G$; \\
(b)  an $E$-module $D$ equipped with a right $G$-action as above corresponds to the $B$-module 
$D^G = \{d \in D~|~dg = d {\rm~for ~all~} g \in G\}$, where the scalar multiplication 
$B\times D^G\to D^G$ is induced by the scalar multiplication
$B\times D_p\to D_p$.
\end{theorem}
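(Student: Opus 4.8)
The plan is to obtain \thref{4.3} as a direct translation of \thref{3.8}(b) into the language of modules, the only real work being the concrete identification of $Z^1\bigl(G,(E\hbox{-}\Mod)^{\rm op}\bigr)$. By \thref{4.1}, the adjunction \equref{4.1} equipped with the $G$-category structure \equref{4.2} is a $G$-Galois adjunction, so \thref{3.8}(b) applies with $\xX=(E\hbox{-}\Mod)^{\rm op}$ and $\yY=(B\hbox{-}\Mod)^{\rm op}$ and gives an equivalence $(B\hbox{-}\Mod)^{\rm op}\sim Z^1\bigl(G,(E\hbox{-}\Mod)^{\rm op}\bigr)$; passing to opposite categories yields $B\hbox{-}\Mod\sim Z^1\bigl(G,(E\hbox{-}\Mod)^{\rm op}\bigr)^{\rm op}$. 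It then remains to describe the right-hand category explicitly and to trace the two correspondences (a) and (b) through the dualisations.

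To identify $Z^1\bigl(G,(E\hbox{-}\Mod)^{\rm op}\bigr)$ I would unwind \deref{1.1}. An object is an $E$-module $D$ together with morphisms $\xi_g\colon g(D)\to D$ in $(E\hbox{-}\Mod)^{\rm op}$, i.e. $E$-linear maps $D\to g(D)=D_g$, subject to $\xi_1=1_D$ and $\xi_{hg}=\xi_h\,h(\xi_g)$; moreover each $\xi_g$ is an isomorphism by \reref{1.2}(a). Writing $\xi_g(d)=dg$ and using \equref{4.2}: the $E$-linearity of $\xi_g\colon D\to D_g$ says exactly $(ed)g=(eg)(dg)$; since each functor $h(-)$ leaves the underlying additive map of a morphism unchanged, the cocycle identity unwinds to $d(hg)=(dh)g$ and $\xi_1=1_D$ to $d1=d$; finally, because $p(b)g=p(b)$, the identity $(ed)g=(eg)(dg)$ forces $(p(b)d)g=p(b)(dg)$, so each $\xi_g$ is a $B$-module automorphism of $D$ (regarded as a $B$-module via $p$, which agrees on $D$ and on $D_g$). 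Thus an object of $Z^1\bigl(G,(E\hbox{-}\Mod)^{\rm op}\bigr)$ is precisely an $E$-module with a right $G$-action by $B$-module automorphisms satisfying $(ed)g=(eg)(dg)$, and the same kind of translation of \deref{1.1}(b) identifies a morphism $(D,\xi)\to(D',\xi')$ with an $E$-linear $G$-equivariant map $D'\to D$; taking opposites reverses the arrows and produces exactly the category in the statement.

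Next I would trace the correspondences. For (a): by \equref{3.17} a $B$-module $A$, viewed in $(B\hbox{-}\Mod)^{\rm op}$, corresponds to the cocycle on the $E$-module $U(A)=E\ot_B A$ whose component $\xi_g$ is $U(\varepsilon_A)\,\eta_{g(U(A))}$; substituting the explicit formulas $\eta_D(e\ot d)=ed$ and $\varepsilon_A(a)=1\ot a$ associated to \equref{4.1} and composing in $(E\hbox{-}\Mod)^{\rm op}$ (hence in the reverse order in $E\hbox{-}\Mod$) gives $e\ot a\mapsto eg\ot a$, i.e. $(e\ot a)g=eg\ot a$. For (b): by the converse part of \thref{3.8}(b) a cocycle $(D,\xi)$ corresponds to $F(D)/G$, where $F(D)=D_p$ carries the $G$-object structure $g\mapsto F(\xi_g)$ in $(B\hbox{-}\Mod)^{\rm op}$, which under the dictionary above is exactly the given right $G$-action $d\mapsto dg$; the colimit \equref{3.14} in $(B\hbox{-}\Mod)^{\rm op}$ is a limit in $B\hbox{-}\Mod$, namely the $G$-fixed submodule $D^G=\{d\in D~|~dg=d~~{\rm for~all}~~g\in G\}$, with the scalar multiplication inherited from $D_p$. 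Combining these gives the asserted equivalence together with its two descriptions.

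The main obstacle is not conceptual but purely a matter of bookkeeping: there are two nested layers of opposite categories, together with the twisted $E$-action on $g(D)=D_g$, and one must keep the variance of every arrow straight --- in particular the direction of $\xi_g$, of the morphisms of cocycles, and of the colimit computing $F(D)/G$ --- so that the dictionary $\xi_g\leftrightarrow(d\mapsto dg)$ is applied consistently throughout. Once that dictionary is in place, the remaining verifications (the cocycle identity becoming associativity of the action, and the two explicit formulas in (a) and (b)) are routine computations in module theory.
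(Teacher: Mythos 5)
Your proposal is correct and takes essentially the same route as the paper's own proof: it unwinds Definition 1.1 in $(E\hbox{-}\Mod)^{\rm op}$ via \equref{4.2} to identify $1$-cocycles with $E$-modules carrying a right $G$-action by $B$-module automorphisms satisfying $(ed)g=(eg)(dg)$, obtains (a) by substituting the explicit unit and counit of \equref{4.1} into \equref{3.17}, and deduces (b) from the converse part of \thref{3.8}(b), the colimit in the opposite category being the fixed-point module $D^G$.
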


\begin{proof}
A $1$-cocycle $(D,\xi)\ : G \to (E\hbox{-}\Mod)^{\rm op}$ is an $E$-module $D$ together with a family 
$\xi=(\xi_g)_{g\in G}$ of $E$-module isomorphisms $\xi_g:\ D \to D_g$ in $\xX$ satisfying 
$\xi_1=1_X$ and 
$\xi_{hg} = \xi_g\xi_h$ (having in mind that $h(\xi_g)=\xi_g$, since 
$h(-) :\ (E\hbox{-}\Mod)^{\rm op}\to (E\hbox{-}\Mod)^{\rm op}$ is the restriction-of-scalars functor corresponding to $h$). Since the $E$-module structure on $D_g$ is given by \equref{4.2}, it is easy to 
see that to say that $\xi_g:\ D\to D_g$ is an $E$-module isomorphism is to say that 
$\xi_g(ed) = (eg)\xi_g(d)$ for all $e\in E$, $d\in D$, and $g\in G$. After that we put 
$dg = \xi_g(d)$, and this gives the desired description of the category 
$Z^1(G,(E\hbox{-}\Mod)^{\rm op})^{\rm op}$. It remains to prove (a) and (b).

(a) By \thref{3.8}(b), the $1$-cocycle $G \to (E\hbox{-}\Mod)^{\rm op}$ corresponding to a $B$-module $A$, 
is the pair $(E\ot_BA, (\eta_{g(E\ot_BA))}(E\ot_B\varepsilon_A))_{g\in G})$. Using the explicit formulae for $\varepsilon$ and $\eta$ given in the description of the adjunction \equref{4.1}, we have
$$(\eta_{g(E\ot_B A))}(E\ot_B \varepsilon_A))(e\ot a)=
\eta_{g(E\ot_B A))}(e\ot (1\ot a))=e(1\ot a)=eg\ot a,$$
where the last equality follows from the fact that $e(1\ot a)$ is calculated in
$g(E\ot_B A)$.
According to our way of defining a right G-action out of a $1$-cocycle
$G\to (E\hbox{-}\Mod)^{\rm op}$ this gives
$$(e\ot a)g= (\eta_{g(E\ot_B A))}(E\ot_B \varepsilon_A))(e\ot a)=eg\ot a,$$
as desired.

(b) now follows immediately from \thref{3.8}(b).
\end{proof}

\begin{theorem}\thlabel{4.4}
Suppose the equivalent conditions of \thref{4.1} hold, and $A_0$ is a fixed $B$-module.
Then the full subcategory of the category of $B$-modules
$$B\hbox{-}\Mod [A_0]=\{A\in B\hbox{-}\Mod~|~
E\ot_B A\cong E\ot_B A_{[0]}~{\rm in}~E\hbox{-}\Mod\}$$
 is equivalent to the category
$Z^1(G,{\End(E\ot_B A_0)^{\rm op}}_{\xi_0})^{\rm op}$, where
$(E\ot_B A_0,\xi_0):\ G \to (E\hbox{-}\Mod)^{\rm op}$ is the $1$-cocycle, corresponding to
$A_0$, which makes $\End(E\ot_B A_0)^{\rm op}$ a one-object $G$-category, as in
Observation \ref{obs1.3}. Moreover:

(a) The $G$-category constructed above on $\End(E\ot_B A_0)^{\rm op}$, written as
$$G\times \End(E\ot_B A_0)^{\rm op}\to \End(E\ot_B A_0)^{\rm op},~~~
(g,u)\mapsto gu,$$
has, for each $g\in G$, the $E$-module isomorphism
$g(u):\ E\ot_BA_0\to E\ot_BA_0$ defined by
\begin{equation}\eqlabel{4.5}
(gu)(t)= (u(tg))g^{-1}~~~~(t\in E\ot_B A_0),
\end{equation}
using the right action of $G$ on $E\ot_B A_0$ described in \ref{th:4.3} (a).

(b) In particular
\begin{equation}\eqlabel{4.6}
B\hbox{-}\Mod [A_0]/\cong ~{\rm is~bijective~to~}H^1(G,\Aut(E\ot_B A_0)^{\rm op},
\end{equation}
where $B\hbox{-}\Mod [A_0]/\cong$ is the set of isomorphism classes of objects of the category 
$B\hbox{-}\Mod [A_0]$ and the action of $G$ on $\Aut(E\ot_B A_0)^{\rm op}$ is defined by 
\equref{4.5}.
\end{theorem}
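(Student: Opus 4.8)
The plan is to derive \thref{4.4} as a direct translation of \thref{3.9} via the dictionary established in \thref{4.1} and \thref{4.3}, taking care only of the opposite-category bookkeeping. First I would invoke \thref{4.1} to know that the adjunction \equref{4.1} is a $G$-Galois adjunction, so that \thref{3.9} applies with $\xX = (E\hbox{-}\Mod)^{\rm op}$, $\yY = (B\hbox{-}\Mod)^{\rm op}$, $U = E\ot_B(-)$, and $Y_0 = A_0$. Then \thref{3.9} gives an equivalence $\yY[Y_0]\sim Z^1(G,\End(U(Y_0))_{\xi_0})$; dualizing, $\yY[Y_0]^{\rm op}$ is exactly the subcategory $B\hbox{-}\Mod[A_0]$ (note $U(Y)\cong U(Y_0)$ in $\yY$ means $E\ot_BA\cong E\ot_BA_0$ in $E\hbox{-}\Mod$), and $\End(U(Y_0))$ computed in $\yY$ is $\End(E\ot_B A_0)^{\rm op}$, which yields the stated equivalence $B\hbox{-}\Mod[A_0]\sim Z^1(G,\End(E\ot_B A_0)^{\rm op}_{\xi_0})^{\rm op}$. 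The $1$-cocycle $\xi_0$ is the one produced in \thref{4.3}(a), i.e. $(E\ot_B A_0, (\eta_{g(E\ot_BA_0)}(E\ot_B\varepsilon_{A_0}))_{g\in G})$, corresponding to the right $G$-action $(e\ot a)g = eg\ot a$.

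For part (a), I would unwind the definition of the $G$-action $\rho(\xi_0)$ on $\End(U(Y_0))$ from Observation~\ref{obs1.3}, namely $gu = \xi_{0g}\, g(u)\,(\xi_{0g})^{-1}$, remembering that all arrows live in the opposite category, so composition is reversed and $g(-)$ on $\End(E\ot_BA_0)^{\rm op}$ is induced by restriction of scalars along the automorphism $g$. Translating each piece back to honest $E$-module maps: $g(u)$ is the conjugate of $u$ by the scalar-twist, and conjugating further by $\xi_{0g}$ — which, as identified in \thref{4.3}, is precisely ``multiply by $g$'' on the underlying set $E\ot_B A_0$ — should collapse to the formula $(gu)(t) = (u(tg))g^{-1}$. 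This is the one genuine computation in the proof; the main care is getting the direction of the isomorphisms $\xi_{0g}$ right (they go $D\to D_g$ in $E\hbox{-}\Mod$, hence $g(X)\to X$ in the opposite category, as \deref{1.1}(a) demands) and tracking that in $(E\hbox{-}\Mod)^{\rm op}$ the composite $\xi_h h(\xi_g)$ reads as $\xi_g\xi_h$ on the module side.

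Part (b) is then immediate: \thref{3.9} already records that the set of isomorphism classes of objects in $\yY[Y_0]$ is bijective to $H^1(G,\End(U(Y_0))_{\xi_0}) = H^1(G,\Aut(U(Y_0))_{\xi_0})$, and passing to opposite categories does not change isomorphism classes of objects, so $B\hbox{-}\Mod[A_0]/\!\cong$ is bijective to $H^1(G,\Aut(E\ot_B A_0)^{\rm op}_{\xi_0})$ with the action \equref{4.5} restricted to automorphisms (which it preserves, again by Observation~\ref{obs1.3}).

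I expect the only real obstacle to be the opposite-category bookkeeping in part (a): every composition, every ``$g(-)$'', and every $\xi_{0g}$ must be read in $(E\hbox{-}\Mod)^{\rm op}$ and then transported back to $E\hbox{-}\Mod$ to recover \equref{4.5}, and it is easy to drop an inverse or reverse a composite along the way. A clean way to organize it is to first verify that $\xi_{0g}\colon E\ot_BA_0\to (E\ot_BA_0)_g$ acts as $t\mapsto$ (the element $tg$, viewed in the twisted module), then compute $gu = (\xi_{0g})^{-1}\circ_{E\text{-}\Mod} g(u)\circ_{E\text{-}\Mod}\xi_{0g}$ on an element $t$, and finally note the reversal that turns this into the displayed $(gu)(t)=(u(tg))g^{-1}$. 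Everything else is formal.
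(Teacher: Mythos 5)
Your proposal is correct and takes essentially the same route as the paper: the equivalence and part (b) are obtained as a direct translation of \thref{3.9} (applicable by \thref{4.1}), and part (a) is proved by identifying $\xi_{0g}(e\ot a)=eg\ot a$ from the proof of \thref{4.3}(a) and unwinding $gu=\xi_{0g}\,g(u)\,(\xi_{0g})^{-1}$ from Observation~\ref{obs1.3} in $(E\hbox{-}\Mod)^{\rm op}$, using that $g(-)$ is the dual restriction-of-scalars functor, so $g(u)$ has the same underlying map as $u$ and the op-composition becomes $(\xi_{0g})^{-1}\circ u\circ \xi_{0g}$ in $E\hbox{-}\Mod$, giving \equref{4.5}. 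Only your phrase that $g(u)$ is ``the conjugate of $u$ by the scalar-twist'' should be sharpened to this identification, which your final organized computation in fact already uses.
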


\begin{proof}
The first assertion of the theorem is really direct translation of \thref{3.9} and the same is true for the assertion \ref{th:4.4}(b) provided we can use \ref{th:4.4}(a). Therefore we only need to prove 
\ref{th:4.4}(a). For, we observe:

(i) As we see from the proof of \ref{th:4.3} (a), $\xi_0=(\xi_{0g})_{g\in G}$, where
$\xi_{0g}:\ E\ot_B A_0\to (E\ot_B A_0)_g$ is defined by $\xi_{0g}(e\ot a)=eg\ot a$.

(ii) Recall the formula $ga=\xi_g g(a)(\xi_g)^{-1}$ from Observation \ref{obs1.3}.
It tells us that $gu$ in \ref{th:4.4} (a) is defined by
$gu=\xi_{0g}u(\xi_{0g})^{-1}$ (since $g(-):\ (E\hbox{-}\Mod)^{\rm op}\to (E\hbox{-}\Mod)^{\rm op}$
is the dual restriction-of-scalars functor, we write $u$ instead of $g(u)$).

(iii) $eg\ot a)$ above is the same as $(e\ot a)g$ in $E\ot_B A_0$ (on which $G$ acts via $B$-module automorphisms as described in \ref{th:4.3}(a)).

\equref{4.5} follows from these three observations.
\end{proof}

\begin{remark}\relabel{4.5}
The following obvious reformulations are useful:

(a) Formula \equref{4.5} implies
\begin{equation}\eqlabel{4.7}
(gu)(1\ot a)=(u(1\ot a))g^{-1},
\end{equation}
for each $a\in A_0$ - which determines $gu$ since it is an $E$-module homomorphism and 
$E\ot_B A_0$ is generated by the elements of the form $1\ot a$ ($a\in A_0$)
as an $E$-module.

(b) Since sending elements to their inverses determines an isomorphism between
$\Aut (E\ot_B A_0)^{\rm op}$ and $\Aut(E\ot_B A_0)$, we can rewrite \ref{th:4.4}(b) as
\begin{equation}\eqlabel{4.8}
B\hbox{-}\Mod [A_0]/\cong ~{\rm is~bijective~to~}H^1(G,\Aut(E\ot_B A_0),
\end{equation}
where the action of $G$ on $\Aut(E\ot_B A_0)$ is defined by \equref{4.5} again.
\end{remark}

\begin{remark}\relabel{4.6}
If we are not interested in the cohomological side of the story, the first assertion of \thref{4.3}
 reduces to:

(a) Under the equivalent conditions of \thref{4.1}, the category $B\hbox{-}\Mod$
  is equivalent to the category of $E$-modules $D$ on which $G$ acts on the right via $B$-module automorphisms with $(ed)g = (eg)(dg)$ for all $e \in E$, $d \in D$, and $g \in G$.
  
After that \ref{th:4.3}(a) implies that:

(b) Under the equivalent conditions of \thref{4.1}, the category $B\hbox{-}\Mod [A_0]$
 is equivalent to the category defined as follows:\\
$\bullet$
the objects are right actions of $G$ on $E\ot_B A_0$ via $B$-module automorphisms with
\begin{equation}\eqlabel{4.9}
(et)g=(eg)(tg),~{\rm for~all~}e\in E,~t\in E\ot_B A_0,~{\rm and}~g\in G\};
\end{equation}
$\bullet$  the morphisms are $E$-module homomorphisms that preserve $G$-action.

This suggests to ask if \thref{4.4} provides a better (non-cohomological) description of the category 
$B\hbox{-}\Mod [A_0]$? It is easy to check that the direct translation of the first two assertions of 
\thref{4.4} gives:

(c) Under the equivalent conditions of \thref{4.1}, the category $B\hbox{-}\Mod [A_0]$ is equivalent to the category defined as follows:\\
$\bullet$ the objects are right actions $*$ of $G$ on $E\ot_B A_0$ via $E$-module automorphisms, which together with the fixed right $G$-action defined as in \ref{th:4.3}(a), satisfy
\begin{equation}\eqlabel{4.10}
t*hg=((t*h)h*g)h^{-1}~~{\rm for all~}t\in E\ot_B A_0,~{\rm and}~g,h\in G;
\end{equation}
$\bullet$ the morphisms are the $E$-module homomorphisms that preserve the $G$-action $*$.

Indeed, using \equref{1.4} with \equref{1.5}, and denoting $\varphi(g)(t)$ by $t*g$, we have
\begin{eqnarray*}
&&\hspace*{-2cm}
t*hg=\varphi(hg)(t)=((h\varphi(g))(\varphi(h)))(t)=(h\varphi(g))((\varphi(h)(t))\\
&=&(h\varphi(g))(t* h)= ((\varphi(g)((t* h)h)))h^{-1}= ((t* h)h* g)h^{-1},
\end{eqnarray*}
where the reason of writing $(h\varphi(g))(\varphi(h))$ instead of 
$(\varphi(h))(h\varphi(g))$ is that we are applying \equref{1.4} to $1$-cocycles
$G\to \End(E\ot_B A_0)^{\rm op}$.\\
In comparison with (b), the advantage of (c) is that the variable $G$-actions are actions via $E$-module automorphisms, although formula \equref{4.10} looks complicated unless it is interpreted cohomologically.
\end{remark}

{\it Again, all these calculations and results can be copied for various kinds of algebras (see 
\reref{4.2}).}

\subsection*{Galois cohomology of commutative rings}
\begin{theorem}\thlabel{4.7}
In the notation above, suppose conditions \ref{th:4.1}(b) and \ref{th:4.1}(c) hold. Then the following conditions are equivalent:
\begin{enumerate}
\item[(a)] condition \ref{th:4.1}(a);
\item[(b)] $p$ is injective and makes $E$ a finitely generated projective $B$-module;
\item[(c)] $p$ induces an isomorphism $B\cong E^G$.
\end{enumerate}
\end{theorem}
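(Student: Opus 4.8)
The plan is to establish the cycle of implications (a)$\Rightarrow$(c)$\Rightarrow$(b)$\Rightarrow$(a), keeping conditions \ref{th:4.1}(b) and \ref{th:4.1}(c) in force throughout; this is in essence a repackaging of the classical characterizations of $G$-Galois extensions of commutative rings (\cite{CHR}). The only non-elementary external ingredient is the one already invoked in the proof of \thref{4.1}: condition (a), i.e.\ purity of $p$ as a morphism of $B$-modules, is equivalent to $U$ being monadic, hence to $p$ being an effective (co)descent morphism, so that the fork $B\to E\rightrightarrows E\ot_B E$ with coface maps $e'\mapsto 1\ot e'$ and $e'\mapsto e'\ot 1$ is exact (i.e.\ $p(B)=\{e\in E\mid e\ot 1=1\ot e\}$); see \cite{JT}.

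For (a)$\Rightarrow$(c): a pure monomorphism is injective, so $p(B)\hookrightarrow E$, and $p(B)\subseteq E^G$ since $G$ fixes $p(B)$ by hypothesis. For the reverse inclusion, given $e\in E^G$ I would use \ref{th:4.1}(b) to observe that $h(e\ot 1)$ and $h(1\ot e)$ are both the constant map $g\mapsto e$, whence $e\ot 1=1\ot e$ in $E\ot_B E$ by injectivity of $h$; exactness of the fork above then forces $e\in p(B)$, so $E^G=p(B)\cong B$. For (b)$\Rightarrow$(a): if $p$ is injective and $E$ is finitely generated projective over $B$, then $E$ is $B$-flat, and for each maximal ideal $\mathfrak m$ of $B$ the localization $E_{\mathfrak m}$ is a finitely generated free $B_{\mathfrak m}$-module that is nonzero (since $B_{\mathfrak m}\to E_{\mathfrak m}$ stays injective, localization being exact) and hence of positive rank; thus $\Spec(E)\to\Spec(B)$ is surjective, $E$ is faithfully flat over $B$, and a faithfully flat ring homomorphism is a pure monomorphism of modules, which is (a).

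The substantive step is (c)$\Rightarrow$(b). Assuming $p$ induces an isomorphism $B\cong E^G$ (so $p$ is injective), I would produce a \emph{Galois coordinate system}: since $G$ is finite by \ref{th:4.1}(c) and $h$ is bijective by \ref{th:4.1}(b), choose $\sum_{i=1}^n x_i\ot y_i\in E\ot_B E$ to be the preimage under $h$ of the map $G\to E$ sending $1\in G$ to $1$ and every other element to $0$; by definition of $h$ this says $\sum_i(x_i g)y_i=\delta_{g,1}$ for all $g\in G$, and applying the automorphism $g^{-1}$ also $\sum_i x_i(y_i h)=\delta_{h,1}$ for all $h\in G$. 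The trace $\mathrm{tr}(e)=\sum_{g\in G}eg$ (a finite sum, by \ref{th:4.1}(c)) is $p(B)$-linear and takes values in $E^G=p(B)$, so $\phi_i:=p^{-1}\circ\mathrm{tr}(y_i\cdot-)\colon E\to B$ is $B$-linear for each $i$; the second identity then gives $\sum_i p(\phi_i(e))\,x_i=\sum_i x_i\,\mathrm{tr}(y_i e)=e$ for all $e\in E$, exhibiting $(x_i)_i,(\phi_i)_i$ as a finite dual basis, so $E$ is finitely generated projective over $B$ and (b) holds. (One also gets (b)$\Rightarrow$(c) directly: faithful flatness of $E/B$ makes the fork exact, and the computation in (a)$\Rightarrow$(c) places $E^G$ inside its equalizer; but this is subsumed by the cycle.)

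The step I expect to be the real obstacle is (a)$\Rightarrow$(c), or rather the input it rests on: that a pure monomorphism of commutative rings is an effective descent morphism (equivalently, that the associated change-of-base functor is comonadic), so that $B\to E\rightrightarrows E\ot_B E$ is exact. This is precisely the fact from \cite{JT} on which \thref{4.1} already depends, so its use here is legitimate; everything else — the dual-basis bookkeeping in (c)$\Rightarrow$(b) and the local-rank computation in (b)$\Rightarrow$(a) — is routine commutative algebra.
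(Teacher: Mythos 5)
Your proof is correct, and it follows the same overall cycle $(a)\Rightarrow(c)\Rightarrow(b)\Rightarrow(a)$ as the paper, with the step $(b)\Rightarrow(a)$ agreeing in substance with the paper's appeal to ``finitely generated projective $\Rightarrow$ pure'' (\equref{4.11}) — you merely spell out the intermediate ``injective $+$ f.g.\ projective $\Rightarrow$ faithfully flat $\Rightarrow$ pure''. The two middle steps, however, are justified differently. For $(a)\Rightarrow(c)$ the paper reuses its own descent theorem: under the equivalence of \thref{4.3}, the $B$-module $B$ corresponds to $E\ot_B B\cong E$ with the original $G$-action, and \thref{4.3}(b) then gives $B\cong E^G$; you instead argue directly that for $e\in E^G$ one has $h(e\ot 1)=h(1\ot e)$, so $e\ot 1=1\ot e$ by \thref{4.1}(b), and purity (exactness of the fork $B\to E\rightrightarrows E\ot_B E$, as in \cite{JT}) forces $e\in p(B)$ — a more hands-on argument that needs only purity and injectivity of $h$ rather than the full Galois adjunction. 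For $(c)\Rightarrow(b)$ the paper simply cites \cite{CHR} (and \cite{CS}, \cite{DI}), whereas you reprove the relevant lemma from scratch: taking $\sum_i x_i\ot y_i=h^{-1}(\delta_1)$ as Galois coordinates and using the trace to build a finite dual basis $(x_i,\phi_i)$, which is exactly the classical Chase--Harrison--Rosenberg argument. What each approach buys: the paper's proof is shorter and exploits machinery already established (its \thref{4.3} and the literature), while yours is self-contained and makes the imported ingredients explicit; your verifications (the identity $\sum_i x_i(y_ih)=\delta_{h,1}$ obtained by applying $g^{-1}$, the $B$-linearity of the trace, the local-rank argument for faithful flatness) are all sound.
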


\begin{proof}
$(a)\Rightarrow (c)$: 
Under the category equivalence described in \thref{4.3}, the $B$-module $E$ corresponds to the 
$E$-module $E\ot_B B\cong E$, on which $G$ acts as originally on $E$ (see 
\ref{th:4.3}(a)), and then \ref{th:4.7}(c) follows from \ref{th:4.3}(b).\\
$(c)\Rightarrow (b)$ follows from the results of \cite{CHR} (see also 
\cite{CS} and \cite{DI}), and $(b)\Rightarrow (a)$ is simply an instance of the well-known implication
\begin{equation}\eqlabel{4.11}
{\rm ``finitely~generated~projective"}~~\Longrightarrow~~{\rm ``pure"}.\vspace*{-6mm}
\end{equation}
\end{proof}

From the results of \cite{CHR} we also conclude:

\begin{corollary}\colabel{4.8}
The equivalent conditions of \thref{4.1} hold if and only if $p:\ B\to E$ is a Galois extension with Galois group $G$ in the sense of S. U. Chase, D. K. Harrison, and A. Rosenberg \cite{CHR}. 
\end{corollary}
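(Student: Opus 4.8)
The plan is to read this off from \thref{4.7} together with the classical list of equivalent characterizations of Galois extensions in \cite{CHR}, in the same spirit as the proof of \thref{4.7}, which already invokes \cite{CHR} for one of its implications. Recall from \cite{CHR} that, for a finite group $G$ acting on $E$ by $B$-algebra automorphisms with $B=E^G$, the following are equivalent and any of them may serve as the definition of ``$p:\ B\to E$ is a Galois extension with group $G$'': the canonical map $h:\ E\ot_B E\to GE$, $h(e\ot e')(g)=(eg)e'$, is bijective; there is a ``Galois system'' $x_i,y_i\in E$ with $\sum_i x_i(y_ig)=\delta_{1,g}$ for all $g\in G$; $E$ is finitely generated projective as a $B$-module and $h$ is bijective; and several further forms. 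In particular, a Galois extension always has $G$ finite, $B\cong E^G$ (via $p$), $h$ bijective, and $E$ finitely generated projective over $B$, i.e. precisely conditions \ref{th:4.1}(b), \ref{th:4.1}(c), and \ref{th:4.7}(b)--(c).

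With this in hand both implications are bookkeeping. For the forward one, assume the equivalent conditions of \thref{4.1}. Then \ref{th:4.1}(b) and \ref{th:4.1}(c) hold, so \thref{4.7} applies and makes \ref{th:4.1}(a) equivalent to \ref{th:4.7}(c), i.e. $p$ induces an isomorphism $B\cong E^G$. Hence $G$ is finite, $h$ is bijective, and $B\cong E^G$: this is exactly the \cite{CHR} data, so $p$ is a Galois extension with group $G$. Conversely, assume $p$ is a Galois extension with group $G$ in the sense of \cite{CHR}. Then $G$ is finite and $h$ is bijective (these are \ref{th:4.1}(c) and \ref{th:4.1}(b)), while the equivalent \cite{CHR} conditions also give $B\cong E^G$ and $E$ finitely generated projective over $B$, i.e. \ref{th:4.7}(b)--(c). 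Since \ref{th:4.1}(b) and \ref{th:4.1}(c) hold, \thref{4.7} identifies \ref{th:4.1}(a) with \ref{th:4.7}(c); hence \ref{th:4.1}(a) holds, and so all of \ref{th:4.1}(a)--(c) hold.

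The only genuinely non-self-contained ingredient, and the one I expect to be the main obstacle, is the appeal to \cite{CHR} for the mutual equivalence of the various formulations of the Galois property: the paper's own development connects only \ref{th:4.1}(a)--(c) and \ref{th:4.7}(b)--(c) to each other, and it is \cite{CHR} that ties these to ``$p$ is a Galois extension'' in the intended (e.g. Galois-system) sense. Two small points should be recorded along the way. First, faithfulness of the $G$-action on $E$, if it is demanded by the adopted definition, is automatic here: if some $g\neq 1$ acted trivially on $E\neq 0$, the $1$- and $g$-components of $h$ would coincide, contradicting surjectivity of $h$ onto $GE$. Second, the isomorphism ``$B\cong E^G$'' in \ref{th:4.7}(c) must be read with $p$ corestricted to the subring $E^G\subseteq E$, matching the \cite{CHR} convention $B=E^G$; this uses that $G$ acts on $E$ over $B$ (i.e. $p(b)g=p(b)$), which is part of the standing hypotheses of \seref{4}.
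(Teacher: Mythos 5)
Your proposal is correct and matches the paper's (essentially unwritten) argument: the paper presents \coref{4.8} as an immediate consequence of \thref{4.7} together with the results of \cite{CHR}, which is exactly the bookkeeping you carry out — using \thref{4.7} to trade condition \ref{th:4.1}(a) for $B\cong E^G$ in the presence of \ref{th:4.1}(b),(c), and then identifying the package (finite $G$, $h$ bijective, $B\cong E^G$) with the Chase--Harrison--Rosenberg definition. Your additional remarks on faithfulness and the corestriction of $p$ are fine but not needed beyond what the paper implicitly assumes.
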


\begin{remark}\relabel{4.9}
Recall that when $E$ (and therefore also $B$) has no non-trivial idempotents, the fact that 
$p:\ B\to E$ is a Galois extension with Galois group $G$, implies that the action of $G$ induces an isomorphism $G \cong (\Aut_B(E))^{\rm op}$; in particular this is the case when $E$ (and therefore also $B$) is a field. Since every group is isomorphic to its opposite group, ``op" can be omitted here as well as in various applications of Theorems \ref{th:4.3} and \ref{th:4.4} (see in particular \reref{4.5}(b)). Note also that, if $p:\ B\to E$ is a Galois extension with Galois group $G$, merely the existence of an isomorphism $G \cong \Aut_B(E)$ implies that $E$ has no non-trivial idempotents. Therefore in the situation $G \cong \Aut_B(E)$, one usually can think of a Galois extension as simply a pair $(E,B)$, usually written as $E/B$, where $B \subseteq E$, $p:\ B\to E$ is the inclusion map, and 
$G = \Aut_B(E)$ (or, better, $G = (\Aut_B(E))^{\rm op}$). Moreover, in the 
case of fields, according to classical Galois theory, the following conditions on $B \subseteq E$ are equivalent:
\begin{enumerate}
\item[(a)]  $E/B$ is a Galois extension as in \coref{4.8}, with some $G$;
\item[(b)] $E/B$ is a Galois extension as in \coref{4.8}, with $G = (\Aut_B(E))^{\rm op}$;  
\item[(c)]  $B = E^G$ for some finite group $G$ that acts on $E$ via $B$-algebra automorphisms;  
\item[(d)]  $E/B$ is a finite normal and separable field extension;  
\item[(e)]  there exists a separable polynomial $f\in B[x]$, for which $E$ is the splitting field.
\end{enumerate}
And when these conditions hold, one often writes $H^n(E/B,-)$ instead of $H^n(G,-)$, or 
$H^n(E/B,F)$ instead of $H^n(G,F(E/B))$, where $F$ is a functor from a suitable category of extensions to the category of groups; the pointed sets $H^n(E/B,F)$ are then called Galois cohomology sets 
of $E/B$ in coefficients in $F$. In fact $H^n(E/B,F)$ is:
\begin{itemize}
\item always a group when $n=0$; 
\item just a pointed set and usually defined only for $n = 0, 1$, if the group $F(E/B)$ is not abelian; 
\item always an abelian group when so is $F(E/B)$.
\end{itemize}
The definition of Galois cohomology also extends to infinite extensions, and in particular $H^n(E/B,F)$ is sometimes written as $H^n(B,F)$ when $E$ is the separable closure of $B$. However, this involves cohomology of profinite (topological) groups, which we do not consider in this paper.
\end{remark}

\subsection*{Hilbert Theorem 90}\hfill\break
Let $p:\ B\to E$ be a Galois extension with Galois group $G$, and let us take 
$A_0$ to be the free $B$-module $B^n$ on $n$ generators. Using the standard isomorphisms
$E\ot_B B^n\cong E^n$ and
$\Aut_{E\hbox{-}\Mod}(E^n)\cong \GL_n(E)$, and \thref{4.4}, we obtain
\begin{equation}\eqlabel{4.12}
B\hbox{-}\Mod [B^n]/\cong~{\rm is~bijective~to~}H^1(G,\GL_n(E)),
\end{equation}
where $\GL_n(E)$ is the group of invertible $n\times n$-matrices with entries in $E$. 
But how does $G$ act on $\GL_n(E)$? Let us calculate this carefully.

First, according to standard linear algebra, an (invertible) $n\times n$-matrix $m = (m_{ij}) 
\in \GL_n(E)$ corresponds to the automorphism $\theta_M\in \Aut_{E\hbox{-}\Mod}(E^n)$
defined on basic vectors
$x_1=(1,0,\cdots,0),\cdots, x_n=(0,\cdots,0,1)$ by
$$\theta_M(x_i)=m_{1i}x_1+\cdots +m_{ni}x_n.$$
Next, identifying $E\ot_B B^n$ with $E^n$, we can assume that the basic vectors above are identified with $1\ot (1,0,\cdots,0),\cdots, 1\ot (0,\cdots,0,1)$ respectively, 
and for these basic vectors we can use \equref{4.7}, with $\theta_M$ playing the role of $u$. 
This gives:
\begin{eqnarray*}
(g\theta_M)(x_i) &=& (\theta_M(x_i))g^{-1}=
(m_{1i}x_1+\cdots +m_{ni}x_n)^{-1}\\
&=& (m_{1i}g^{-1})(x_1g^{-1})+\cdots+ (m_{ni}g^{-1})(x_ng^{-1})\\
&=& (m_{1i}g^{-1})x_1+\cdots (m_{ni}g^{-1})x_n,
\end{eqnarray*}
and therefore tells us that the matrix corresponding to $\theta_M$ must be 
$(m_{ij}g^{-1})$. That is, the action of $G$ on $\GL_n(E)$ can be defined by
\begin{equation}\eqlabel{4.13}
g(m_{ij})= (m_{ij}g^{-1}).
\end{equation}
All $B$-modules in $B\hbox{-}\Mod[B^n]$ are obviously isomorphic to each other when $B$ is a field. Therefore, if $B$ is a field, we have
\begin{equation}\eqlabel{4.14}
H^1(G,\GL_n(E))=0,
\end{equation}
i.e. $H^1(G,\GL_n(E))$ is a one-element set; for $n = 1$ this gives the classical result called ``Hilbert Theorem 90Ó:
\begin{equation}\eqlabel{4.15}
H^1(E/B,\uU)=H^1(G,\uU(E))=
H^1(G,\GL_1(E))=0,
\end{equation}
where $\uU(E)$ 
is the group of non-zero elements in $E$. One can also easily deduce its ``commutative-ring-versionÓ, which is the exact sequence
\begin{equation}\eqlabel{4.16}
0 \longrightarrow H^1(G,\uU(E)) \longrightarrow \Pic(B)\longrightarrow \Pic(E)
\end{equation}
of abelian groups; $\uU(E)$ here is the group of invertible elements in $E$, while $\Pic(B)$ and 
$\Pic(E)$ denote the Picard groups of $B$ and $E$ respectively.

\subsection*{Commutative separable algebras}\hfill\break
Let $B\subseteq A = B(a)$ be a field extension generated, over $B$, by a single separable element $a$ (which is always the case for a finite separable field extension), $f\in B[x]$ the minimal polynomial of $a$ over $K$, $E$ a field containing the splitting field of $f$, and 
$e_1,\cdots,e_n$ the roots of $f$ in $E$, yielding
\begin{equation}\eqlabel{4.17}
f=\prod_{i=1,\cdots, n} (x-e_i)~~{\rm in}~~E[x].
\end{equation}
Having in mind that $A \cong B[x]/fB[x]$ and that the linear polynomials $x - e_i$ ($i = 1,\cdots,n$) are relatively prime, we calculate
\begin{eqnarray*}
&&\hspace*{-1cm}
E\ot_BA \cong E\ot_B (B[x]/fB[x])\cong E[x]/fE[x]\\
&\cong&
E[x]/(\prod_{i=1,\cdots, n} (x-e_i)) E[x]
\cong \prod_{i=1,\cdots,n} (E[x]/(x-e_i)E[x])\cong E^n,
\end{eqnarray*}
which suggests the following

\begin{definition}\delabel{4.10}
Suppose $E/B$ is a (finite) Galois field extension; a $B$-algebra $A$ is said to be split over 
$E$, if $E\ot_BA \cong E^n$ as $E$-algebras for some natural number $n$.
\end{definition}

In fact it is well known that:\\
$\bullet$ a commutative $B$-algebra is {\it separable} if and only if it is split over some finite Galois 
extension of $B$, and, equivalently, if and only if it is a finite product of subextensions 
of some finite Galois extension of $B$;\\
$\bullet$  therefore a field extension of $B$ is a separable $B$-algebra if and only if it is a finite separable extension.

Moreover, there are various generalizations of separability that reasonably agree with suitable generalized versions of \deref{4.10} (see \cite{BJ} and references there, especially 
\cite{DI,Gro,M}). Returning to the case of a fixed Galois field extension $E/B$, we have
that $B\hbox{-}\Alg[B^n]$ is  the category of $B$-algebras that are $n$-dimensional as
           $B$-vector spaces, and are finite products of subextensions of $E/B$, and
\begin{equation}\eqlabel{4.19}
B\hbox{-}\Alg[B^n]/\cong~{\rm is~bijective~to~}H^1(G,\sS_n),
\end{equation}
since $\Aut_{E\hbox{-}\Alg}(E\ot_B B^n)\cong \Aut_{E\hbox{-}\Alg}(E^n)
\cong \sS_n$, the $n$-th symmetric group. Here, the $G$-action on $\sS_n$ is trivial since:\\
$\bullet$ the $E$-algebra $E\ot_B B^n$ is (again, not freely though) generated by the set
\begin{equation}\eqlabel{4.20}
X(E\ot_B B^n)=\{1\ot (1,0,\cdots,0),\cdots, 1\ot (0,\cdots,0,1)\}
\end{equation}
of its minimal (non-zero) idempotents, which is invariant under any ring automorphism;\\
$\bullet$ 
 for every $a\in B^n$, we have $(1\ot a)g = 1\ot a$ by \ref{th:4.3}(a), and so \equref{4.5}
  gives $(gu)(t) = u(t)$whenever $t\in X(E\ot_B B^n)$.
  
\subsection*{Azumaya algebras}\hfill\break
Let us restrict our considerations again to the case of a Galois field extension. While Azumaya algebras over a field can simply be defined as central simple algebras, let us use a definition that generalizes to the ring case: 

\begin{definition}\delabel{4.11} 
Suppose $E/B$ is a (finite) Galois field extension; a (non-commutative)
$B$-algebra $A$ is said to be an Azumaya algebra split over $E$, if 
$E\ot_BA \cong \mM_n(E)$ as 
$E$-algebras for some natural number $n$, where $\mM_n(E)$ is the $E$-algebra of $n\times n$-matrices with entries in $E$.
\end{definition}

For a fixed $E/B$ we have that $B\hbox{-}\Alg[\mM_n(B)]$ is
 the category of Azumaya $B$-algebras 
          split over $E$ that are $n^2$-dimensional as $B$-vector spaces, and
\begin{equation}\eqlabel{4.22}
B\hbox{-}\Alg[\mM_n(B)] /\cong~{\rm is~bijective~to~}H^1(G,\PGL_n(E)),
\end{equation}
where $\PGL_n(E)=\GL_n(E)/\uU(E)$ is isomorphic to 
$\Aut_{E\hbox{-}\Alg}(E\ot_B \mM_n(B))$ by the Skolem-Noether Theorem according to
which every automorphism of $\mM_n(E)~(\cong E\ot_B \mM_n(B))$ is inner.
But again, we need to know how $G$ acts on $\PGL_n(E)$.\\

Since the elements of $\PGL_n(E)$ are equivalence classes of elements in $\GL_n(E)$, and they correspond to inner automorphisms of $\mM(B)$, we present $u$ in \equref{4.5} as such a class 
$[s]$, and then $u(t) = sts^{-1}$. After that \equref{4.5} gives:
\begin{eqnarray*}
 (gu)(t) &=& (u(tg))g^{-1} = (s(tg)s^{-1})g^{-1} = (sg^{-1})(tgg^{-1})(s^{-1}g^{-1})\\
 &=& (sg^{-1})t(s^{-1}g^{-1})
          = (sg^{-1})t(sg^{-1})^{-1} = v(t),
 \end{eqnarray*}
where $v$ is to $sg^{-1}$ as $u$ to $s$. Therefore the action of $G$ on $\PGL_n(E)$ can be defined by
\begin{eqnarray}
g[s]&=& [sg^{-1}]\eqlabel{4.23}\\
g[(s_{ij})]&=&[(s_{ij}g^{-1})],
\end{eqnarray}
since obviously $[sg^{-1}]=[(s_{ij}g^{-1})]$.

\refs

\bibitem [Borceux, Janelidze, 2001]{BJ} 
F. Borceux and G. Janelidze, Galois Theories, Cambridge Studies in Advanced Mathematics 72, Cambridge University Press, 2001.

\bibitem [Chase, Harrison, Rosenberg, 1965]{CHR}
S. U. Chase, D. K. Harrison, and A. Rosenberg, Galois theory and cohomology of commutative rings, {\sl Mem. Amer. Math. Soc.} {\bf 52} (1965), 15--33.

\bibitem [Chase, Sweedler, 1969]{CS}
S. U. Chase and M. E. Sweedler, Hopf algebras and Galois theory, 
{\sl Lecture Notes Math.} {\bf 97}, Springer, Berlin, 1969.

\bibitem [DeMeyer, Ingraham, 1971]{DI}
F. R. DeMeyer and E. Ingraham, Separable algebras over a commutative ring, 
{\sl Lecture Notes Math.} {\bf 181}, Springer, Berlin,  1971.

\bibitem [Greither, 1992]{Gre}
C. Greither, Cyclic Galois extensions of commutative rings, 
{\sl Lecture Notes Math.} {\bf 1534}, Springer, Berlin,  1992.

\bibitem [Grothendieck, 1971]{Gro}
A. Grothendieck, Rev\^ etements \'etales et groupe fondamental, SGA 1, expos\'e V, 
{\sl Lecture Notes Math.} {\bf 224}, Springer, Berln,  1971.

\bibitem [Jahnel, 2000]{Jah}
J. Jahnel, The Brauer-Severi variety associated with a central simple algebra, 
{\sl Linear Algebraic Groups and Related Structures} {\bf  52} (2000), 1--60.

\bibitem [Janelidze, Tholen, 2004]{JT}
G. Janelidze and W. Tholen, Facets of Descent III: Monadic descent for rings and algebras, 
{\sl Appl. Categ. Structures} {\bf 12} (2004), 461--467.

\bibitem [Knus, Ojanguren, 1974]{KO}
M.-A. Knus and M. Ojanguren, Th\'eorie de la descente et algbres dÕAzumaya, 
{\sl Lecture Notes Math.} {\bf 389}, Springer, Berlin, 1974

\bibitem [Magid. 1974]{M}
A. R. Magid, The separable Galois theory of commutative rings, Dekker, New York, 1974.

\bibitem [Serre, 1959]{S1}
J.-P. Serre, Groupes algŽbriques et corps de classes, Hermann, Paris, 1959.

\bibitem [Serre, 1964]{S2}
J.-P. Serre, Cohomologie Galoisienne, 
{\sl Lecture Notes Math.} {\bf 5}, Springer, Berln, 1964.

\bibitem [Weil, 1956]{W}
A. Weil, The field of definition of a variety, 
{\sl Amer. J. Math.} {\bf 78} (1956), 569--574.

\endrefs

\end{document}